\newtheorem{thm}[equation]{Theorem}
\newtheorem{cor}[equation]{Corollary}
\newtheorem{lem}[equation]{Lemma}
\newtheorem{prop}[equation]{Proposition}
\theoremstyle{definition}
\numberwithin{equation}{section}
\newcommand{\depth}{\mathsf{d}}
\newcommand{\rootdeg}{\mathsf{rdeg}}
\newcommand{\m}{\mathsf{m}}
\newcommand{\rt}{\mathsf{rt}}
\newcommand{\cc}{\mathbb{C}}
\newcommand{\z}{\mathbb{Z}}
\newcommand{\n}{\mathfrak{n}}
\newcommand{\h}{\mathfrak{h}}
\newcommand{\g}{\mathfrak{g}}
\newcommand{\W}{\mathcal{W}}
\newcommand{\T}{\mathbb{T}}
\newcommand{\C}{{\bf c} \,}
\newcommand{\lb}{\lbrace \hspace{-.03in}\mid  \hspace{-.045in} }
\newcommand{\rb}{ \hspace{-.045in} \mid \hspace{-.03in} \rbrace}
\definecolor{mjo}{rgb}{.4,0,.9}
\newcommand{\Hom}{\mbox{Hom}}
\begin{document}

\title[Whittaker modules for the insertion-elimination Lie algebra]
{Whittaker modules for the insertion-elimination Lie algebra}

\author[Matthew Ondrus and Emilie Wiesner]{Matthew Ondrus and Emilie Wiesner}

\address{\noindent Mathematics Department,
Weber State University, 
Department of Mathematics, 
1415 Edvalson St., Dept. 2517, 
Ogden, UT 84408-2517 USA,  \emph{E-mail address}: \tt{mattondrus@weber.edu}}

\address{\noindent Department of Mathematics, Ithaca College, Williams Hall Ithaca, NY 14850, USA, \ \  \emph{E-mail address}: \tt{ewiesner@ithaca.edu}}

 
\thanks{\textbf{MSC Numbers (2010)}: Primary: 17B10, 17B65;  \ Secondary: 17B70 \hfill \newline
\textbf{Keywords}:  insertion-elimination algebra,  Whittaker module,  rooted trees}

\date{}

\maketitle

\begin{abstract}
This paper addresses the representation theory of the insertion-elimination Lie algebra, a Lie algebra that can be naturally realized in terms of tree-inserting and tree-eliminating operations on rooted trees.  The insertion-elimination algebra admits a triangular decomposition in the sense of Moody and Pianzola, and thus it is natural to define a Whittaker module corresponding to a given algebra homomorphism. Among other results, we show that the standard Whittaker modules are simple given certain constraints on the corresponding algebra homomorphism. 
\end{abstract}

\section{Introduction}

The notion of an insertion-elimination algebra was introduced by Connes and Kreimer \cite{CK} as a way of describing the combinatorics of inserting and collapsing subgraphs of Feynman graphs.  Connes and Kreimer investigated Hopf algebras related to rooted trees; the insertion-elimination algebra arises in relation to the dual algebra of one of these Hopf algebras.  Further results focusing on the Hopf algebra perspective have been obtained by Hoffman \cite{Hoff} and Foissy \cite{Fos}.  The operations of insertion and elimination can be described in terms of rooted trees, and Sczesny \cite{S} used this approach in studying the insertion-elimination Lie algebra $\g$, proving that $\g$ is simple as a Lie algebra and giving some fundamental results about representations for $\g$.  (We also note the papers \cite{KM04} and \cite{KM05} by Mencattini and Kreimer, investigating the ladder insertion-elimination algebra.  This Lie algebra was also inspired by \cite{CK} and can be characterized in terms of operations on trees; however the relations and resulting structure of the Lie algebra are quite different from the insertion-elimination algebra $\g$ under consideration in this paper.)  {The insertion-elimination algebra $\g$ is infinite-dimensional and $\z$-graded, and thus it bears some obvious similarity to the Virasoro algebra.  In the case of the Virasoro algebra, however, the weight spaces from the $\z$-grading are one-dimensional, whereas the weight spaces for the insertion-elimination algebra fail to have finite growth in the sense of \cite{Math92}.

The insertion-elimination algebra also possesses a triangular decomposition $\g = \n^- \oplus \h \oplus \n^+$ in the sense of \cite{MP95}.  This structure suggests certain classes of representations as natural objects of study, including Verma modules (the subject of study in \cite{S}) and Whittaker modules (the focus of this paper). Whittaker modules have been studied for a variety of Lie algebras with triangular decomposition (cf.  \cite{Ko78}, \cite{OW2008}, \cite{Ch08}, \cite{ZTL10}).   Additionally, Batra and Mazorchuk \cite{BatMaz} have developed a general framework for studying Whittaker modules, which captures all Lie algebras with triangular decomposition. 

This paper focuses on the search for simple Whittaker modules for the insertion-elimination algebra, a topic that cannot (currently) be resolved in the generality outlined in \cite{BatMaz}. In particular, we investigate the standard Whittaker module $M_\eta := U(\g) \otimes_{U(\n^+)} \cc_\eta$, where $\cc_\eta$ is the one-dimensional $\n^+$-module on which $\n^+$ acts by a homomorphism $\eta : \n^+ \to \cc$.  It is shown in \cite{BatMaz} that $M_\eta$ has a unique simple quotient $L(\mu, \eta)$ for each $\mu \in \h^+$. In Section \ref{sec:simplicityofM} of this paper we show that, under certain restrictions on $\eta$, the standard module $M_{\eta}$ itself is simple and thus is the only simple Whittaker module of ``type $\eta$." We note that the simplicity of $M_{\eta}$ provides further evidence for several conjectures made in \cite{BatMaz} about the nature of simple Whittaker modules for Lie algebras with triangular decomposition.  (See Conjectures 33, 34, and 40 in \cite{BatMaz}.) Batra and Mazorchuk \cite{BatMaz} also establish a connection between Verma modules and standard Whittaker modules; in Section \ref{sec:Vermas}, we exploit this connection to argue that all Verma modules for the insertion-elimination algebra admit the same central character.

\section{Definitions and notation}

\subsection{Rooted Trees}

The insertion-elimination algebra is defined in terms of operations on rooted trees. Therefore, we first establish a variety of notation and terminology describing rooted trees.  A {\it rooted tree} $t$ is an undirected, cycle-free graph with a distinguished vertex or {\it root}, denoted $\rt (t)$.  In this paper, rooted trees are displayed with the root at the top of the figure.

Let $\T$ denote the set of all rooted trees.  For $t \in \T$,  $V(t)$ is the set of vertices of $t$, $E(t)$ the set of edges of $t$, and $|t|$ the cardinality of $V(t)$.  For example, if $t$ is the rooted tree $\psset{levelsep=0.3cm, treesep=0.3cm} 
\pstree{\Tr{$\bullet$}}{\Tr{$\bullet$} \pstree{\Tr{$\bullet$}}{\Tr{$\bullet$} \Tr{$\bullet$} \Tr{$\bullet$}}}$, then $|t| = 6$ and $|E(t)| = 5$.   The \emph{root degree} of $t$, $\rootdeg (t)$, is the number of edges incident on $\rt (t)$. The \emph{depth} of $t$, $\depth (t)$, is the maximum length (i.e. number of edges) of any simple path (i.e. a path with no repeated vertices) originating at $\rt (t)$.  If $v \in V(t)$ is such that there is a path from $\rt (t)$ to $v$ that has length $\depth (t)$, then we say that $v$ has maximal distance from the root of $t$.  As an example, the depth of the following tree is 4, and there are two vertices that have maximal distance from the root. 
$$
\pstree[levelsep=.4cm, treesep=.4cm]{\Tr{$\bullet$}}
{\pstree[]{\Tr{$\bullet$}}{\pstree[]{\Tr{$\bullet$}}{\Tr{$\bullet$} \Tr{$\bullet$}}} 
 \Tr{$\bullet$}  
 \pstree[]{\Tr{$\bullet$}}{\Tr{$\bullet$} \pstree[]{\Tr{$\bullet$}}{\Tr{$\bullet$} \Tr{$\bullet$} \pstree[]{\Tr{$\bullet$}}{\Tr{$\bullet$} \Tr{$\bullet$}}}}}
$$

For $s, t \in \T$ and $v \in V(s)$, let $s \cup_v t$ denote the rooted tree obtained by joining the root of $t$ to $s$ at the vertex $v$ via a single edge. We naturally identify $s$ and $t$ as subtrees of $s \cup_v t$, so that $V(s), V(t) \subseteq V(s \cup_v t)$ and  $E(s), E(t) \subseteq E(s \cup_v t)$.  For $t \in \T$ and $e \in E(t)$, removing the edge $e$ divides $t$ into two rooted trees $R_e(t)$, the ``root" subtree, and $P_e(t)$, the ``branch" subtree, so that $t= R_e(t) \cup_v P_e(t)$ for some $v \in V(R_e(t))$.  
  For $t_1, t_2, t_3 \in \T$, define 
\begin{align*}
\alpha (t_1, t_2, t_3) &= | \{ e \in E(t_2) \mid  R_e (t_2) = t_3, \ P_e(t_2) = t_1 \} |  \\
\beta (t_1, t_2, t_3) &= | \{ v \in V(t_3) \mid t_1 = t_3 \cup_v t_2 \} |.
\end{align*}

We call a multiset of rooted trees a {\it forest} and represent a forest two ways: as a collection $\lb t_1, \ldots, t_n \rb$ where the $t_i \in \T$ are not necessarily distinct; and as an ordered pair $(S, \m)$, where $S = \{s_1, \ldots, s_r \} \subseteq \T$ is the set of rooted trees in the forest and $\m: \T \rightarrow \z$ gives the multiplicity of each tree in the forest.  In particular, $\m(s)=0$ if $s \not\in S$, so we may regard $S$ as the set of rooted trees having nonzero multiplicity in the forest.
There is the following correspondence between the notations:
\begin{equation}\label{eqn:forestSetMult}
{\bf s}: (\{s_1, \ldots, s_k\}, \m) \leftrightarrow  \lb \underbrace{s_1, \ldots, s_1}_{\m(s_1)}, \underbrace{s_2, \ldots, s_2}_{\m(s_2)}, \ldots , \underbrace{s_k, \ldots, s_k}_{\m(s_k)}\rb.
\end{equation}
For a forest ${\bf s}= (\{s_1, \ldots, s_k\}, \m)=\lb t_1, \ldots, t_n \rb$, define the {\it length} of ${\bf s}$
$$
l({\bf s}) = \sum_{i=1}^k \m(s_i)=n;
$$
and the {\it order} of ${\bf s}$
$$
|{\bf s}| = \sum_{i=1}^k \m(s_i)|s_i|=\sum_{i=1}^n |t_i|.
$$

For a tree $t \in \T$, a vertex $v \in V(t)$, and a forest $\bf s$, let $t \cup_v {\bf s}$ be the rooted tree (unique up to isomorphism) created by joining each tree in $\bf s$ to the vertex $v$ via a new edge.   Using this notation, we define $\bigcup \bf s=\bullet \cup_\bullet {\bf s}$; that is, if ${\bf s} = \lb t_1, \ldots, t_n \rb$, then $\bigcup \bf s$ is the rooted tree created by joining a fixed vertex $v_0$ to the root of each $t_i$ via a new edge and declaring $\rt ( \bigcup {\bf s} ) = v_0$:

\begin{equation}
\pstree[levelsep=1.5cm]{\Tr{$\bullet$}}{\Tcircle{$t_1$} \Tcircle{$t_2$} \Tcircle{$\cdots$} \Tcircle{$t_n$}}
\end{equation}
Let $t \in \T$. If $t= \bigcup \lb t_1, \ldots, t_n \rb$ for some forest $\lb t_1, \ldots, t_n \rb$, then we refer to $t_1, \ldots, t_n$ as the {\it components} of $t$ and we define the forest of components $\C (t)$ by $\C (t)= \lb t_1, \ldots, t_n \rb$.

\subsection{The insertion-elimination algebra}\label{subsec:defsNotation}

The {\it insertion-elimination algebra} $\g$ is the Lie algebra over $\cc$ with basis $\{ d \} \cup \{ D_t^{\pm} \mid t \in \T \}$ and relations 
\begin{align*}
[D_s^+, D_t^+] &= \sum_{r \in \T} \left( \beta (r, s,t) - \beta (r,t,s)  \right) D_r^+  \\
&=  \sum_{v \in V(t)} D_{t \cup_v s}^+ - \sum_{v \in V(s)} D_{s \cup_v t}^+\\
[D_s^-, D_t^-] &= \sum_{r \in \T} \left( \alpha (t, r, s) - \alpha (s,t, r) \right) D_r^- \\
[D_s^-, D_t^+] &= \sum_{r \in \T} \alpha (s,t,r) D_r^+  + \sum_{r \in \T} \beta (s,t,r) D_r^-  \\
[D_t^-, D_t^+] &= d \\
[d, D_t^-] &= - |t| D_t^- \\
[d, D_t^+] &= |t| D_t^+,
\end{align*}
where $s, t \in \T$.  For example, $\psset{levelsep=0.3cm, treesep=0.3cm} [ D_{\pstree{\Tr{$\bullet$}}{}}^-, D_{\pstree{\Tr{$\bullet$}}{ \Tr{$\bullet$} \Tr{$\bullet$}}}^+ ] = 2 D_{\pstree{\Tr{$\bullet$}}{ \Tr{$\bullet$}}}^+$, 
$\psset{levelsep=0.3cm, treesep=0.3cm} [ D_{\pstree{\Tr{$\bullet$}}{ \Tr{$\bullet$} \Tr{$\bullet$}}}^-, D_{\pstree{\Tr{$\bullet$}}{}}^+ ] = D_{\pstree{\Tr{$\bullet$}}{ \Tr{$\bullet$}}}^-$ 
and
$$\psset{levelsep=0.3cm, treesep=0.3cm}
[ D_{\pstree{\Tr{$\bullet$}}{}}^+, D_{\pstree{\Tr{$\bullet$}}{ \Tr{$\bullet$} \Tr{$\bullet$}}}^+ ] 
= D_{\pstree{\Tr{$\bullet$}}{ \Tr{$\bullet$} \Tr{$\bullet$} \Tr{$\bullet$}}}^+ \quad + \quad 2 \, D_{\pstree{\Tr{$\bullet$}}{ \Tr{$\bullet$} \pstree{\Tr{$\bullet$}}{\Tr{$\bullet$}}}}^+ \quad - \quad D_{\pstree{\Tr{$\bullet$}}{\pstree{\Tr{$\bullet$}}{ \Tr{$\bullet$} \Tr{$\bullet$}}}}^+
$$
$$\psset{levelsep=0.3cm, treesep=0.3cm}
[ D_{\pstree{\Tr{$\bullet$}}{}}^-, D_{\pstree{\Tr{$\bullet$}}{ \Tr{$\bullet$} \Tr{$\bullet$}}}^- ] = 
-3 \, D_{\pstree{\Tr{$\bullet$}}{ \Tr{$\bullet$} \Tr{$\bullet$} \Tr{$\bullet$}}}^- \quad - \quad D_{\pstree{\Tr{$\bullet$}}{ \Tr{$\bullet$} \pstree{\Tr{$\bullet$}}{\Tr{$\bullet$}}}}^- \quad + \quad D_{\pstree{\Tr{$\bullet$}}{\pstree{\Tr{$\bullet$}}{ \Tr{$\bullet$} \Tr{$\bullet$}}}}^-
$$

Define the following subalgebras of $\g$:
\begin{align*}
\g_n &= \{ x \in \g \mid [d,x] = nx \}, \quad n \in \z; \\
\n^+ &= \{ D_t^+ \mid t \in \T \}= \bigoplus_{n>0} \g_n; \\
\n^- &= \{ D_t^- \mid t \in \T \}= \bigoplus_{n<0} \g_n; \\
\h &= \cc d= \g_0.
\end{align*}
Note that $\g = \n^+ \oplus \h \oplus \n^-$ and the subspaces $\g_n$ are weight spaces for the adjoint action of $\h$. Using the anti-involution given in \cite{OW-Aut}, this gives $\g$ a triangular decomposition in the sense of \cite[p.~95]{MP95}.  (The results in this paper do not depend on the anti-involution.)

Recall that for a given (ordered) basis for a Lie algebra, there exists a PBW basis for the universal enveloping algebra and hence for various induced modules.  With this in mind, we fix an (arbitrary) ordering $\prec$ on $\T$. Consider a forest ${\bf s}=(\{s_1, \ldots, s_k\}, \m)$, and assume without loss that $s_1 \succ \cdots \succ s_k$. Define
$$D_{\bf s}^-=(D_{s_1}^-)^{\m(s_k)} \cdots(D_{s_k}^-)^{\m(s_1)}, $$ 
and $D_{\emptyset}^-=1$.  (Equivalently, if we represent a forest as ${\bf s}=\lb t_1, \ldots, t_n \rb$, then we label the trees in the forest so that $t_1 \succeq \cdots \succeq t_n$, and we regard $D_{\bf s}^- = D_{t_1}^- \cdots D_{t_n}^-$.)  Then $\{ D_{\bf s}^- \mid {\bf s} \ \mbox{a forest} \}$ is basis for $U(\n^-)$ by the PBW theorem. Also, note that $U(\h)\cong \cc[d]$; that is, the elements of $U(\h)$ are polynomials in $d$, and thus we regard $\cc [d] \subseteq U( \g )$ when convenient.

\subsection{Whittaker modules and categories} \label{sec:Whittakerbackground}
The definitions, notation, and results in this section follow \cite{BatMaz}.

A Lie algebra $\n$ acts locally finitely on a module $V$ if $U(\n)v$ is finite-dimensional for every $v \in V$. The {\it Whittaker Category} $\W$ of $\g = \n^- \oplus \h \oplus \n^+$ is the full subcategory of $\g$-Mod on which the action of $\n^+$ is locally finite.

Because the insertion-elimination algebra $\g$ has a triangular decomposition, the pair $(\g, \n^+)$ is a Whittaker pair in the sense of \cite{BatMaz}.   In this context, the simple finite-dimensional modules are one-dimensional: they can be written as $\cc_\eta$ where $\eta \in (\n^+/[\n^+, \n^+])^*$ and $x.1 = \eta(x)$. Moreover, for $\eta \in (\n^+/[\n^+, \n^+])^*$, the set 
$$
V^{\eta} = \{ v \in V \mid (x-\eta(x))^kv=0 \ \mbox{for $x \in \n^+$ and $k>>0$}\}
$$  
is a $\g$-submodule of $V$, and any module $V \in \mbox{Ob} (\W)$ decomposes as 
$V= \bigoplus_{\eta} V^{\eta}$.   For modules $V, W \in \mbox{Ob} (\W)$, $\Hom_\g (V^\eta, W^\mu)=0$ unless $\eta = \mu$. Thus, modules $V$ in $\W$ can be understood via their components $V^{\eta}$; or equivalently, $\W$ can be understood via the blocks $\W_\eta$ where $\mbox{Ob} (\W_\eta)= \{ V \in \W \mid V=V^{\eta}\}$.  Given this, it is natural to investigate the {\it standard Whittaker module} $M_\eta$:
$$
M_\eta = U(\g) \otimes_{U(\n^+)} \cc_\eta \in \mbox{Ob} (\W_\eta ).
$$
For $V \in \W_\eta$, there is at least one {\it Whittaker vector} $v \in V$ such that $x.v=\eta(x)v$ for all $x \in \n^+$.  Therefore, by construction, $M_\eta$ surjects onto any simple Whittaker module in $\W_\eta$.  Thus it follows that if $M_\eta$ is simple, it is the unique simple object in $\W_\eta$.  This paper focuses on conditions that guarantee $M_\eta$ is simple.

Key to our arguments is the following result.

\begin{prop}[\cite{BatMaz}] \label{prop:Wdecomp}
 If a module $V \in \W$ contains a unique (up to scalar multiples) Whittaker vector, then $V$ is simple.
\end{prop}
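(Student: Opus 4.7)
The plan is to argue by contrapositive: assuming $V$ contains a proper nonzero submodule $W \subsetneq V$, I will produce a second Whittaker vector in $V$ linearly independent from $v$, contradicting uniqueness. The main technical input is an existence statement: every nonzero module in $\W_\eta$ contains a Whittaker vector. For $0 \ne n$ in such a module, $U(\n^+)n$ is finite-dimensional by local finiteness, each $x - \eta(x)$ acts nilpotently on it (since $V = V^\eta$), and these operators span a Lie subalgebra of $\fgl(U(\n^+)n)$ consisting of nilpotent endomorphisms (using $\eta([\n^+,\n^+]) = 0$). Engel's theorem then produces a common kernel vector, i.e.\ a Whittaker vector.

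Applying this existence to $W$ and invoking uniqueness forces $v \in W$. Applying it to the nonzero quotient $V/W \in \W_\eta$ yields a Whittaker vector $\bar u$ with some preimage $u \in V\setminus W$. Applying it to the finite-dimensional $\n^+$-invariant subspace $U(\n^+)u \subseteq V$ furnishes $X \in U(\n^+)$ with $Xu$ a Whittaker vector of $V$, which by uniqueness we may rescale to equal $v$. Extending $\eta$ to an algebra map $U(\n^+) \to \cc$ and using $X\bar u = \eta(X)\bar u$ in $V/W$ gives $v - \eta(X)u = Xu - \eta(X)u \in W$; combined with $v \in W$ and $u \notin W$, this forces $\eta(X) = 0$.

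The final step -- converting this $X \in \ker\eta$ with $Xu = v$ into an actual second Whittaker vector of $V$ -- is the main obstacle. The mechanism I would pursue is a cocycle correction: define $\phi\colon \n^+ \to W$ by $\phi(x) = (x-\eta(x))u$, and check (using $\eta([\n^+,\n^+])=0$) that $\phi$ is a $1$-cocycle for the twisted $\n^+$-action $x \cdot w := (x-\eta(x))w$ on $W$, since $[x-\eta(x), y-\eta(y)] = [x,y] = [x,y]-\eta([x,y])$ already as elements. If $\phi = \partial z$ is a coboundary with $z \in W$, then $u - z$ satisfies $(x-\eta(x))(u-z) = 0$ for all $x \in \n^+$, so it is a Whittaker vector; it lies outside $W \supseteq \cc v$ (as $u \notin W$ and $z \in W$), giving the desired contradiction. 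Establishing that every such $\phi$ must be a coboundary is where the bulk of the work sits; I would approach it by combining the uniqueness property (which propagates to every nonzero submodule of $V$, including $W$, since any Whittaker vector in a submodule is automatically one in $V$) with the local finiteness built into $\W$ and the triangular-decomposition structure of $\g$, to constrain the $\n^+$-module structure of $W$ sufficiently to force the cohomological triviality.
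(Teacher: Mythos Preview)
The paper does not supply its own proof of this proposition; it is quoted from \cite{BatMaz} and used as a black box, so there is no argument here to compare yours against.

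Your attempt has a genuine gap, and it is not a mere technicality. You correctly set up the contrapositive, correctly produce (via Engel) a Whittaker vector inside any nonzero submodule $W$, and correctly lift a Whittaker vector $\bar u$ from $V/W$. But the decisive step --- showing that the $1$-cocycle $\phi(x)=(x-\eta(x))u$ on $\n^+$ with values in $W$ is a coboundary --- is left unproved; you say only that this is ``where the bulk of the work sits.'' In fact it cannot be filled in at this level of generality. Take $\g=\fsl_2$, $\eta=0$, and $V=\nabla(0)=M(0)^\vee$ the dual Verma module: then $V\in\W$, the unique vector annihilated by $e$ is the socle generator (so $V$ has a unique Whittaker vector up to scalar), yet $V$ is not simple. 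In this example $W=L(0)\cong\cc$ carries the trivial $\n^+$-action, $u$ is the weight $-2$ vector, and your $\phi$ is a nonzero cocycle into the trivial module, hence not a coboundary. So the proposition, read literally for arbitrary $V\in\W$, is false, and your cohomological mechanism necessarily breaks down.

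For the paper's applications (Theorems~\ref{thm:depthBndedSimp} and~\ref{thm:rootBoundedSimp}) only the case $V=M_\eta$ is needed, and there the proof is a two-liner requiring none of your extra machinery: $M_\eta$ is generated by $1\otimes 1$; any nonzero submodule contains a Whittaker vector (your Engel argument); uniqueness forces that vector into $\cc(1\otimes 1)$; hence the submodule is all of $M_\eta$. Presumably the formulation in \cite{BatMaz} carries a hypothesis of this sort (e.g.\ that $V$ is cyclic on a Whittaker vector) that the present paper has suppressed.
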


\subsection{Lie algebra homomorphisms $\eta: \n^+ \rightarrow \cc$}

We investigate the simplicity of $M_\eta$ in Section \ref{sec:simplicityofM}.  Our results depend on the nature of $\eta: \n^+ \rightarrow \cc$.  With this in mind, we introduce several definitions to describe certain classes of Lie algebra homomorphisms $\n^+ \to \cc$.

A homomorphism $\eta : \n^+ \to \cc$ has \emph{finite support} if there is some positive integer $M$ such that $\eta (D_t^+) = 0$ whenever $|t| \ge M$. Otherwise, $\eta$ has {\it infinite support.} We also define and investigate two generalizations of finite support: depth-bounded homomorphisms and root-bounded homomorphisms.
A homomorphism  $\eta : \n^+ \to \cc$ is \emph{depth-bounded} if there exists a positive integer $M$ such that $\eta (D_t^+) = 0$ whenever $\depth (t) \ge M$.  A homomorphism $\eta : \n^+ \to \cc$ is \emph{root-bounded} if there exists a positive integer $M$ such that $\eta (t) = 0$ whenever $\rootdeg (t) \ge M$.

The problem of characterizing all possible Lie algebra homomorphisms $\eta$ is difficult due to the combinatorics involved in the Lie bracket in $\n^+$. Clearly, a map $\eta$ has finite support if and only if it is both depth-bounded and root-bounded.  However, it is easy to construct an example of a root-bounded homomorphism with infinite support.  Let $\ell_n$ represent the ladder with $n$ vertices, that is, the unique tree on $n$ vertices made up of a single path.  For each $n \in \z_{>0}$ choose $\eta_n \in \cc$, and define $\eta : \n^+ \to \cc$ by $\eta ( D_{\ell_n}^+ ) = \eta_n$ for all $n \ge 1$ and $\eta (D_t^+) = 0$ whenever $t$ is not a ladder.  If $\eta_n \neq 0$ for infinitely many $n$, then $\eta$ has infinite support, is not depth-bounded, and is root-bounded.  It is an open question if there exist homomorphisms with infinite support that are depth-bounded or homomorphisms with infinite support that are neither depth-bounded nor root-bounded.

\bigskip

\section{Simplicity of $M_{\eta}$} \label{sec:simplicityofM}

In this section, for $\eta \neq 0$, we show that $M_{\eta}$ is simple whenever $\eta$ is depth-bounded (Theorem \ref{thm:depthBndedSimp}) or root-bounded (Theorem \ref{thm:rootBoundedSimp}).   The corresponding result for the case that $\eta$ has finite support follows immediately from Theorem \ref{thm:depthBndedSimp}.  The basic approach for both results is to show that the space of Whittaker vectors in $M_{\eta}$ is one-dimensional, from which it follows that $M_\eta$ is simple by Proposition \ref{prop:Wdecomp}.

We outline the method of proof as follows.  We consider an arbitrary basis vector $D_{\bf s}^- p_{\bf s}(d) \otimes 1 \not\in \cc 1 \otimes 1$ of $M_{\eta}$ (where $D_{\bf s}^-$ is defined as in Section \ref{subsec:defsNotation}) and show that there is some $u \in \T$ such that
\begin{enumerate}
\item $(D_u^+-\eta(D_u^+)) D_{{\bf s}'}^- p_{{\bf s}'}(d) \otimes 1= 0$ for forests ${\bf s}'$ ``smaller" than ${\bf s}$ (for an ordering that depends on whether $\eta$ is depth-bounded or root-bounded), and 

\item $(D_u^+-\eta(D_u^+)) D_{\bf s}^- p_{\bf s}(d) \otimes 1 \neq 0$.  
\end{enumerate}

\noindent For $v \not\in \cc 1 \otimes 1$, $v$ may be written as 
\begin{equation}
v=\sum_{\mbox{${\bf s}$ is a forest}} D_{\bf s}^- p_{\bf s}(d) \otimes 1, \quad p_{\bf s}(d) \in U( \h ) \cong \cc [d].
\end{equation}
For such a $v$, there is a ``maximal" forest $\bf s$ such that $p_{\bf s}(d) \neq 0$; and so there is a corresponding $u \in \T$ such that $(D_u^+-\eta(D_u^+)) v \neq 0$.  Thus, $v$ is not a Whittaker vector.

Subsections \ref{sec:depthBounded} and \ref{sec:rootBounded} address the particulars of this argument, which depend on whether $\eta$ is depth-bounded or root-bounded.  First we establish some general computations.

The following lemma holds more generally than in the present context, and the proof is straightforward.

\begin{lem}
Let $x^+ \in U(\n^+)$ and $u \in U(\n^- \oplus  \cc d  )$, and let $\eta : \n^+ \to \cc$ be an algebra homomorphism.  Then in $M_\eta$, 
$$(x^+ - \eta (x^+) ) (u \otimes 1) = [x^+, u] \otimes 1.$$
\end{lem}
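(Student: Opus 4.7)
The plan is to reduce the identity to a single short computation in the associative algebra $U(\g)$, using the defining property of the induced module $M_\eta = U(\g) \otimes_{U(\n^+)} \cc_\eta$. Recall that $\eta$ extends uniquely to an algebra homomorphism $\eta : U(\n^+) \to \cc$, so the one-dimensional $U(\n^+)$-module $\cc_\eta$ satisfies $z \cdot 1 = \eta(z) \cdot 1$ for every $z \in U(\n^+)$. Consequently, in $M_\eta$,
\[
y z \otimes 1 \;=\; y \otimes z \cdot 1 \;=\; \eta(z) \, (y \otimes 1)
\]
for every $y \in U(\g)$ and $z \in U(\n^+)$.

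Next I would write, inside $U(\g)$, the trivial identity
\[
x^+ u \;=\; u \, x^+ + [x^+, u],
\]
where $[x^+, u] := x^+ u - u x^+$ denotes the commutator in the associative algebra $U(\g)$. Applying $x^+ - \eta(x^+)$ to $u \otimes 1$ and using the above identity gives
\[
(x^+ - \eta(x^+))(u \otimes 1) \;=\; u x^+ \otimes 1 \,+\, [x^+, u] \otimes 1 \,-\, \eta(x^+)(u \otimes 1).
\]
By the tensor-product relation just recalled, $u x^+ \otimes 1 = \eta(x^+)(u \otimes 1)$, which cancels the last term on the right and leaves $[x^+, u] \otimes 1$, as required.

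I do not anticipate any real obstacle: the argument works verbatim for any Whittaker pair and does not use any specific feature of the insertion-elimination algebra or of the subspace $U(\n^- \oplus \cc d)$ in which $u$ lies. The only points that deserve mention are that $\eta$ must be extended to $U(\n^+)$ as an algebra homomorphism so that $\eta(x^+)$ is defined for an arbitrary $x^+ \in U(\n^+)$ (not merely for Lie algebra elements), and that $[x^+, u] \in U(\g)$ is in general a mixture of terms from all three subalgebras, so $[x^+, u] \otimes 1$ is understood as an element of $M_\eta$ via the natural action of $U(\g)$ on the induced module. Both are automatic from the construction, so the proof reduces essentially to the two displayed lines above.
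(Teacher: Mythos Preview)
Your argument is correct and is exactly the straightforward computation the paper has in mind; the paper in fact omits the proof entirely, remarking only that it ``holds more generally than in the present context, and the proof is straightforward.'' Your observation that nothing specific to the insertion-elimination algebra or to $u \in U(\n^- \oplus \cc d)$ is used matches the paper's remark about generality.
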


\begin{lem}\label{lem:D+p(d)v}
Let $\eta : \n^+ \to \cc$ be a Lie algebra homomorphism, and let $M_\eta$ be the standard $\eta$-Whittaker module.  For $p(d) \in \cc [d]$ and $t \in \T$, 
$$(D_t^+ - \eta (D_t^+)) (p(d) \otimes 1) = [D_t^+, p(d)] \otimes 1 = \eta (D_t^+) q(d) \otimes 1,$$
where $q(d) = p(d - |t|) - p(d)$.  
In particular, 
\begin{itemize}
\item[(i)] if $\eta (D_t^+) = 0$, then $ [D_t^+, p(d)] \otimes 1 = 0$;
\item[(ii)] if $deg (p(d))>0$ and $\eta(D_t^+) \neq 0$, then $ (D_t^+ - \eta (D_t^+)) (p(d) \otimes 1)\neq 0$.
\end{itemize}
\end{lem}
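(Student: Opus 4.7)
The plan is to reduce everything to the single commutation relation $[d, D_t^+] = |t| D_t^+$ and then push elements of $U(\n^+)$ across the tensor product defining $M_\eta$. The first equality is immediate from the preceding lemma applied with $x^+ = D_t^+$ and $u = p(d) \in U(\cc d) \subseteq U(\n^- \oplus \cc d)$, so the content of the statement is the computation of $[D_t^+, p(d)] \otimes 1$.

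First I would rewrite the defining bracket as $D_t^+ d = (d - |t|) D_t^+$ and then show by induction on $k$ that $D_t^+ d^k = (d - |t|)^k D_t^+$ in $U(\g)$. By linearity this gives
\begin{equation*}
D_t^+ p(d) = p(d - |t|) D_t^+, \qquad \text{hence} \qquad [D_t^+, p(d)] = \bigl( p(d - |t|) - p(d) \bigr) D_t^+.
\end{equation*}
Now apply this to $1 \otimes 1 \in M_\eta$. Because $D_t^+$ lies in $U(\n^+)$, it can be pushed through the tensor product and replaced by its scalar action $\eta(D_t^+)$, yielding
\begin{equation*}
[D_t^+, p(d)] \otimes 1 = \bigl( p(d - |t|) - p(d) \bigr) D_t^+ \otimes 1 = \eta(D_t^+) \bigl( p(d - |t|) - p(d) \bigr) \otimes 1,
\end{equation*}
which is the claimed formula with $q(d) = p(d - |t|) - p(d)$.

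Part (i) is then immediate, since the right-hand side vanishes when $\eta(D_t^+) = 0$. For part (ii), note that every tree has $|t| \ge 1$, so if $\deg p = n \ge 1$ and the leading coefficient of $p$ is $a_n$, then the coefficient of $d^{n-1}$ in $p(d - |t|) - p(d)$ is $-n|t|a_n \ne 0$; in particular $q(d)$ is a nonzero polynomial. Combined with $\eta(D_t^+) \ne 0$, this means $\eta(D_t^+) q(d)$ is a nonzero element of $\cc[d] \cong U(\h)$. Finally, by the PBW theorem the family $\{ D_{\bf s}^- d^k \otimes 1 \}$ (ranging over forests ${\bf s}$ and integers $k \ge 0$) is a basis of $M_\eta$, so in particular $\{ d^k \otimes 1 : k \ge 0 \}$ is linearly independent in $M_\eta$, and the displayed expression is nonzero.

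There is no real obstacle here; the argument is essentially a direct calculation. The only point requiring a small amount of care is the last sentence: one must invoke the PBW description of $M_\eta$ to pass from ``$q(d)$ is a nonzero polynomial in $\cc[d]$'' to ``$q(d) \otimes 1$ is a nonzero element of $M_\eta$,'' rather than treating the tensor factor as if it were the trivial relation.
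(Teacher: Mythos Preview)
Your proof is correct and follows essentially the same approach as the paper: both compute $D_t^+ p(d) = p(d-|t|) D_t^+$ in $U(\g)$, write $[D_t^+, p(d)] = q(d) D_t^+$, and then push $D_t^+$ across the tensor product to replace it by $\eta(D_t^+)$. Your version is in fact more detailed, since the paper simply asserts (i) and (ii) as immediate consequences, whereas you explicitly verify that $q(d) \neq 0$ when $\deg p > 0$ and invoke PBW to conclude $q(d)\otimes 1 \neq 0$ in $M_\eta$.
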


\begin{proof}
It is straightforward to check that $D_t^+ p(d) = p (d - |t|) D_t^+$, and therefore $[D_t^+, p(d)] = q(d) D_t^+$ where $q(d) = p(d - |t|) - p(d)$. Since $(D_t^+ - \eta (D_t^+)) (p(d) \otimes 1) = [D_t^+, p(d)] \otimes 1$, it then follows that $[D_t^+, p(d)] \otimes 1 = q(d) D_t^+ \otimes 1 = \eta (D_t^+) q(d) \otimes 1$ .
\end{proof}

\subsection{The homomorphism $\eta$ is depth-bounded.}\label{sec:depthBounded}

In this section, we show that $M_\eta$ is simple whenever $\eta \neq 0$ is depth-bounded.

\begin{lem}\label{lem:depthBounded-0}
Suppose that $\eta : \n^+ \to \cc$ is a non-zero depth-bounded algebra homomorphism.  Let $0<M \in \z$ such that $\eta (D_t^+) = 0$ whenever $\depth (t) > M$, let $t_0 \in \T$ with $\depth (t_0) = M$, and assume $v_0 \in V(t_0)$ has maximal distance from the root of $t_0$.   Suppose ${\bf s}_0$ and ${\bf s}$ are forests such that 
\begin{enumerate}
\item $|{\bf s}_0 | > | {\bf s}|$, or 
\item $| {\bf s}_0| = | {\bf s}|$ and $l ({\bf s}_0) > l ({\bf s})$, or 
\item $| {\bf s}_0| = | {\bf s}|$ and $l ({\bf s}_0) = l ({\bf s})$ and ${\bf s}_0 \neq {\bf s}$.
\end{enumerate}
Then for any polynomial $p(d)$,
$$[D_{t_0 \cup_{v_0} {\bf s}_0}^+,  D_{\bf s}^- p(d) ] \otimes 1 = 0.$$
\end{lem}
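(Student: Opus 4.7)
My plan is to prove this by strong induction on $l({\bf s})$. Write $t := t_0 \cup_{v_0} {\bf s}_0$. A crucial preliminary observation is that under any of hypotheses (1)--(3), every tree $t_i$ appearing in ${\bf s}$ satisfies $|t_i| \leq |{\bf s}| \leq |{\bf s}_0| < |t_0| + |{\bf s}_0| = |t|$, and $\depth(t) \geq M+1$ since $v_0$ sits at depth $M$ in $t_0$ with a non-empty forest attached there; in particular $\eta(D_t^+) = 0$. The base case $l({\bf s}) = 0$ reduces to $[D_t^+, p(d)] \otimes 1 = 0$, which is immediate from Lemma \ref{lem:D+p(d)v}(i).

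For the inductive step with $l({\bf s}) \geq 1$, I would factor $D_{\bf s}^- = D_{t_1}^- D_{{\bf s}'}^-$ for $t_1$ the $\succ$-greatest element of ${\bf s}$ and ${\bf s}' = {\bf s} \setminus \{t_1\}$, and apply the Leibniz rule:
\begin{equation*}
[D_t^+, D_{\bf s}^- p(d)] \otimes 1 = [D_t^+, D_{t_1}^-] D_{{\bf s}'}^- p(d) \otimes 1 + D_{t_1}^- [D_t^+, D_{{\bf s}'}^- p(d)] \otimes 1.
\end{equation*}
The second summand vanishes by the inductive hypothesis applied to $({\bf s}_0, {\bf s}')$, which automatically satisfies condition (1) because stripping off $t_1$ strictly decreases the size of the forest. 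Inside the first summand, the defining relations of $\g$ give $[D_t^+, D_{t_1}^-] = -\sum_r \alpha(t_1,t,r) D_r^+ - \sum_r \beta(t_1,t,r) D_r^-$; the $\beta$-terms vanish since $|t_1| < |t|$, and for each surviving $\alpha$-term I would rewrite
\begin{equation*}
D_r^+ D_{{\bf s}'}^- p(d) \otimes 1 = [D_r^+, D_{{\bf s}'}^- p(d)] \otimes 1 + \eta(D_r^+) D_{{\bf s}'}^- p(d) \otimes 1
\end{equation*}
and argue that both summands on the right vanish.

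The heart of the argument is a case analysis on the edge $e$ of $t$ witnessing $\alpha(t_1, t, r) \neq 0$: either $e$ lies inside $t_0$ (necessarily with $v_0$ on the root-side of $e$, since otherwise $|t_1| \geq 1 + |{\bf s}_0| > |{\bf s}|$), or $e$ is one of the new edges joining $v_0$ to the root of some component $s_j \in {\bf s}_0$ (so $t_1 = s_j$), or $e$ lies inside some $s_j$. In each arising case, $r$ retains the form $t_0'' \cup_{v_0''} {\bf s}_0''$ with $\depth(t_0'') = M$, $v_0''$ still at maximal distance, and ${\bf s}_0''$ non-empty; this forces $\depth(r) \geq M+1$ and hence $\eta(D_r^+) = 0$, killing the second summand above. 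Moreover the pair $({\bf s}_0'', {\bf s}')$ can be checked to satisfy one of (1)--(3), so the inductive hypothesis, applied with the new data $t_0'', v_0'', {\bf s}_0''$ in place of $t_0, v_0, {\bf s}_0$, kills the first summand.

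The main obstacle is the delicate degenerate sub-case where cutting the new edge between $v_0$ and the root of the unique component $s_j$ of a singleton forest ${\bf s}_0 = \{s_j\}$ leaves $r = t_0$, of depth exactly $M$, so that $\eta(D_{t_0}^+)$ need not vanish. I would rule this out under each hypothesis in turn: under (1), $|t_1| \leq |{\bf s}| < |{\bf s}_0| = |s_j|$ prevents $t_1 = s_j$; under (2), $l({\bf s}) < l({\bf s}_0) = 1$ forces ${\bf s} = \emptyset$, so there is no $t_1$ at all; and under (3), equal sizes and lengths together with ${\bf s} \neq {\bf s}_0$ force the unique element of ${\bf s}$ to differ from $s_j$. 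This closes the induction.
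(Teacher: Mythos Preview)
Your proof is correct and follows essentially the same route as the paper's: induction on $l({\bf s})$, Leibniz expansion peeling off one $D_{s_1}^-$, vanishing of the second term by induction on the shorter forest, and a three-case edge analysis on the $\alpha$-contributions to handle the first term. The paper simply asserts ``Clearly $\depth(r) > M$'' before its case split, whereas you isolate and dispatch the degenerate singleton-${\bf s}_0$ sub-case of case~(ii) explicitly; this is the one place where that inequality could fail, and your hypothesis-by-hypothesis elimination is exactly what is needed to justify the paper's ``clearly.''
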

\begin{proof}
Note that any one of the assumptions (1), (2), or (3) implies that ${\bf s}_0$ is nonempty.  We proceed by induction on $l({\bf s})$. If $l({\bf s})=0$ (i.e. ${\bf s}$ is the empty forest), then the result follows from Lemma \ref{lem:D+p(d)v} since $\depth ( t_0 \cup_{v_0} {\bf s}_0 ) > \depth (t_0) = M$.  

Now suppose that $l({\bf s}) = k>0$, and write ${\bf s}= \lb s_1, \ldots, s_k \rb$.  For $\tilde {\bf s} = \lb s_2, \ldots, s_k \rb$ (where $\tilde {\bf s}$ may be empty), we have
\begin{align}
(D_{t_0 \cup_{v_0} {\bf s}_0}^+ - \eta (D_{t_0 \cup_{v_0} {\bf s}_0}^+)) (D_{\bf s}^- p(d) \otimes 1) &= [D_{t_0 \cup_{v_0} {\bf s}_0}^+, D_{s_1}^-  D_{\tilde {\bf s}}^- p(d)] \otimes 1 \nonumber \\
&= [D_{t_0 \cup_{v_0} {\bf s}_0}^+, D_{s_1}^- ] D_{\tilde {\bf s}}^- p(d) \otimes 1 \label{eq:depthbounded1} \\ 
& \quad + D_{s_1}^-  [D_{t_0 \cup_{v_0} {\bf s}_0}^+, D_{\tilde {\bf s}}^-  p(d) ] \otimes 1. \label{eq:depthbounded2}
\end{align}
Since $|{\bf s}_0 | > | \tilde {\bf s}|$ and $l ( \tilde {\bf s}) = k-1$, we conclude by induction that 
\newline $[D_{t_0 \cup_{v_0} {\bf s}_0}^+, D_{\tilde{\bf s}}^-  p(d) ] \otimes 1 = 0$  and thus  the term (\ref{eq:depthbounded2}) is also zero.

In (\ref{eq:depthbounded1}), we have $| t_0 \cup_{v_0} {\bf s}_0| > | {\bf s}_0| \ge |{\bf s}| \ge | s_1 |$; and therefore 
\newline 
$[D_{t_0 \cup_{v_0} {\bf s}_0}^+, D_{s_1}^- ] = -\sum_{r \in \T} \alpha (s_1, t_0 \cup_{v_0} {\bf s}_0, r) D_r^+$, 
where $\alpha (s_1, t_0 \cup_{v_0} {\bf s}_0, r)$ is the number of edges $e \in E(t_0 \cup_{v_0} {\bf s}_0)$ such that $R_e(t_0 \cup_{v_0} {\bf s}_0) = r$ and $P_e(t_0 \cup_{v_0} {\bf s}_0) = s_1$.   Clearly $\depth (r) > M$ whenever $\alpha (s_1, t_0 \cup_{v_0} {\bf s}_0, r) \neq 0$.  Therefore $D_r^+D_{\tilde {\bf s}}^- p(d) \otimes 1 = [D_r^+, D_{\tilde {\bf s}}^- p(d) ] \otimes 1$. Thus, to verify that (\ref{eq:depthbounded1}) is zero, it suffices to show that 
$[D_r^+, D_{\tilde {\bf s}}^- p(d) ] \otimes 1 = 0$
whenever $r = R_e ( t_0 \cup_{v_0} {\bf s}_0)$ and $s_1 = P_e(t_0 \cup_{v_0} {\bf s}_0)$.  To prove this, we consider the following three cases for the edge $e$.  
\begin{enumerate}
\item[i.] Suppose $e \in E(t_0) \subseteq E( t_0 \cup_{v_0} {\bf s}_0)$.  Clearly $e$ cannot belong to the simple path from $\rt  (t_0)$ to $v_0$ as this would force $| P_e( t_0 \cup_{v_0} {\bf s}_0 )| \ge 1 + | {\bf s}_0| > |s_1|$.   This implies that $r$ has the form $t_0' \cup_{v_0} {\bf s}_0$, where $t_0' = R_e(t_0)$ and $\depth (t_0') = \depth (t_0) = M$.  
By assumption, $|{\bf s}_0| \ge |{\bf s}|$, so that $| {\bf s}_0 | > | {\bf s} | - |s_1| = | \tilde {\bf s} |$.  Therefore the inductive hypothesis implies $[D_{t_0' \cup_{v_0} {\bf s}_0}^+, D_{\tilde {\bf s}}^- p(d) ] \otimes 1 = 0$.

\smallskip
\item[ii.] Suppose $e$ is an edge joining $v_0$ to the root of one of the trees in ${\bf s}_0$.  In this case, we may write $r = t_0 \cup_{v_0} \tilde {\bf s}_0$, where $\tilde {\bf s}_0$ is a forest formed from $t_0 \cup_{v_0} {\bf s}_0$ by removing a copy of $s_1$ from ${\bf s}_0$.   Note that $| \tilde {\bf s}_0 | = | {\bf s}_0| - | s_1|$ and $| \tilde {\bf s} | = | {\bf s}| - |s_1|$, and it is straightforward to see that $\tilde {\bf s}_0$ and $\tilde {\bf s}$ fall under one of assumptions (1)-(3).  Therefore it follows from induction that $[D_{t_0 \cup_{v_0} \tilde {\bf s}_0}^+, D_{\tilde {\bf s}}^- p(d) ] \otimes 1 = 0$.

\smallskip
\item[iii.] Suppose $e \in E(s) \subseteq E(t_0 \cup_{v_0} {\bf s}_0)$ for some $s \in {\bf s}_0$.  In this case, we may write $r = t_0 \cup_{v_0} {\bf s}_0'$, where ${\bf s}_0'$ is a forest formed by replacing the tree $s \in {\bf s}_0$ with $R_e(s)$ (which is necessarily nonempty).   Note that $| {\bf s}_0' | = | {\bf s}_0| - | s_1|$ and $| \tilde {\bf s} | = | {\bf s} | - |s_1|$, and it is straightforward to show that $ {\bf s}_0', {\bf s}_0$ fall under one of assumptions (1)-(2).  Therefore it follows from induction that $ [D_{t_0 \cup_{v_0} {\bf s}_0'}^+, D_{\tilde {\bf s}}^- p(d) ] \otimes 1 = 0$. 
\end{enumerate}
\end{proof}

For a nonempty forest ${\bf s}=(S, \m)$, let 

$$\Pi ({\bf s})=\prod_{i=1}^k \m(s_i) !,$$ 
where $|S|=k$. That is, $\Pi ({\bf s})$ is  the product of the factorials of the multiplicities of the trees in ${\bf s}$.   We will use the following result in the special case that $\eta (D_{t_0}^+) \neq 0$.

\begin{lem}\label{lem:depthBounded-non0}
Suppose that $\eta : \n^+ \to \cc$ is a non-zero depth-bounded algebra homomorphism.  Let $M \in \z$ be minimal such that $\eta (D_t^+) = 0$ whenever $\depth (t) > M$.  Let $t_0 \in \T$ with $\depth (t_0) = M$, and assume $v_0 \in V(t_0)$ has maximal distance from $\rt (t_0)$.   Then for a nonempty forest ${\bf s}_0$ and a polynomial $p(d)$,
$$[D_{t_0 \cup_{v_0} {\bf s}_0}^+,  D_{{\bf s}_0}^-  p(d)] \otimes 1 = (-1)^{l({\bf s}_0)} \Pi ({{\bf s}_0}) D_{t_0}^+ p(d) \otimes 1.$$
\end{lem}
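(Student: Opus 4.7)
The plan is to prove the identity by induction on $n = l({\bf s}_0)$, using the Leibniz rule for the commutator together with Lemma \ref{lem:depthBounded-0} to strip away all but one contribution. At each step I peel the factor $D_{s_1}^-$ off $D_{{\bf s}_0}^-$, expand $[D_{t_0 \cup_{v_0} {\bf s}_0}^+, D_{s_1}^-]$ via the defining relations, and argue that exactly one family of edges in $t_0 \cup_{v_0} {\bf s}_0$ (namely the new edges joining $v_0$ to copies of $s_1$) produces a nonzero contribution. The multiplicity of those edges supplies the ratio $\Pi({\bf s}_0)/\Pi(\tilde{\bf s}_0) = \m(s_1)$, which will push the induction through.

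For the base case $n=1$, with ${\bf s}_0 = \lb s_1 \rb$, I would expand
\[
[D_{t_0 \cup_{v_0} s_1}^+, D_{s_1}^- p(d)] = [D_{t_0 \cup_{v_0} s_1}^+, D_{s_1}^-]\, p(d) + D_{s_1}^- [D_{t_0 \cup_{v_0} s_1}^+, p(d)].
\]
The second summand vanishes after tensoring by Lemma \ref{lem:D+p(d)v}(i), since $\depth(t_0 \cup_{v_0} s_1) > M$ forces $\eta(D_{t_0 \cup_{v_0} s_1}^+) = 0$. In the first summand, the $\beta$-contributions vanish for order reasons ($|s_1| < |t_0 \cup_{v_0} s_1|$), and among the $\alpha$-contributions the only edge whose root part has depth $\le M$ is the new edge joining $v_0$ to $\rt(s_1)$; it contributes the term $r = t_0$ with coefficient $1$. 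For the remaining edges $\depth(r) > M$, so $D_r^+ p(d) \otimes 1 = \eta(D_r^+) p(d - |r|) \otimes 1 = 0$. The result is exactly $-D_{t_0}^+ p(d) \otimes 1$, matching $(-1)^1 \Pi(\lb s_1 \rb) = -1$.

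For the inductive step I would write ${\bf s}_0 = \lb s_1, s_2, \ldots, s_n \rb$ and $\tilde{\bf s}_0 = \lb s_2, \ldots, s_n \rb$, so $D_{{\bf s}_0}^- = D_{s_1}^- D_{\tilde{\bf s}_0}^-$. Leibniz produces two summands, and the summand $D_{s_1}^- [D_{t_0 \cup_{v_0} {\bf s}_0}^+, D_{\tilde{\bf s}_0}^- p(d)]$ is killed after tensoring by Lemma \ref{lem:depthBounded-0} (condition (1), with ${\bf s} = \tilde{\bf s}_0$). For the remaining summand I enumerate the edges $e$ of $t_0 \cup_{v_0} {\bf s}_0$ with $P_e = s_1$ into three families: (a) the $\m(s_1)$ new edges joining $v_0$ to copies of $s_1$, giving $r = t_0 \cup_{v_0} \tilde{\bf s}_0$; (b) edges interior to $t_0$ (necessarily off the path from $\rt(t_0)$ to $v_0$), giving $r = t_0' \cup_{v_0} {\bf s}_0$ with $\depth(t_0') = M$ and $v_0$ still at maximal distance; (c) edges interior to a tree $s \in {\bf s}_0$, giving $r = t_0 \cup_{v_0} {\bf s}_0'$ with $l({\bf s}_0') = n$ and $|{\bf s}_0'| = |\tilde{\bf s}_0|$. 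In every case $\depth(r) > M$, so $D_r^+ D_{\tilde{\bf s}_0}^- p(d) \otimes 1 = [D_r^+, D_{\tilde{\bf s}_0}^- p(d)] \otimes 1$. Lemma \ref{lem:depthBounded-0} annihilates the contributions from (b) and (c) via conditions (1) and (2) respectively, while the inductive hypothesis applied in case (a) yields $(-1)^{n-1} \Pi(\tilde{\bf s}_0) D_{t_0}^+ p(d) \otimes 1$. Combining with the sign from $[D^+, D^-]$ and the multiplicity $\m(s_1)$ gives $(-1)^n \m(s_1) \Pi(\tilde{\bf s}_0) D_{t_0}^+ p(d) \otimes 1 = (-1)^n \Pi({\bf s}_0) D_{t_0}^+ p(d) \otimes 1$, closing the induction.

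The main obstacle will be the combinatorial bookkeeping: correctly partitioning all edges with $P_e = s_1$ into the three families (a)--(c), verifying that the root parts from (b) and (c) do not coincide with $t_0 \cup_{v_0} \tilde{\bf s}_0$ (they do not, because their forests attached at $v_0$ have length $n$ rather than $n-1$), and checking the factorial identity $\m(s_1) \Pi(\tilde{\bf s}_0) = \Pi({\bf s}_0)$, which follows from $(\m(s_1)-1)! \cdot \m(s_1) = \m(s_1)!$. A secondary subtlety is confirming in case (b) that $R_e(t_0)$ still has depth $M$ with $v_0$ at maximal distance, which is needed so that the hypotheses of Lemma \ref{lem:depthBounded-0} are satisfied when it is applied to the corresponding $r = t_0' \cup_{v_0} {\bf s}_0$.
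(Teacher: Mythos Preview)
Your proof is correct and follows essentially the same approach as the paper: induction on $l({\bf s}_0)$, the Leibniz split of $[D_{t_0 \cup_{v_0} {\bf s}_0}^+, D_{s_1}^- D_{\tilde{\bf s}_0}^- p(d)]$, elimination of the second summand via Lemma~\ref{lem:depthBounded-0}, and the same three-way classification of edges $e$ with $P_e = s_1$. The only remark is that your closing worry about root parts from (b) and (c) coinciding with $t_0 \cup_{v_0} \tilde{\bf s}_0$ is unnecessary: since the sum over $r$ weighted by $\alpha$ is just the sum over edges, and each edge in (b) or (c) yields an $r$ admitting a decomposition to which Lemma~\ref{lem:depthBounded-0} applies, those edge contributions vanish individually regardless of any isomorphism with the case-(a) tree.
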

\begin{proof}
Since $\depth ( t_0 \cup_{v_0} {\bf s}_0) > M$, we have that $\eta ( D_{t_0 \cup_{v_0} {\bf s}_0}^+) = 0$ and by Lemma \ref{lem:D+p(d)v}, 
$$[D_{t_0 \cup_{v_0} {{\bf s}_0}}^+,  D_{{\bf s}_0}^-  p(d)]  \otimes 1=[D_{t_0 \cup_{v_0} {\bf s}_0}^+,  D_{{\bf s}_0}^- ] p(d) \otimes 1.$$

We now proceed by induction on $l({\bf s}_0)$.  For the base case, let ${\bf s}_0 = \lb s \rb$ for some $s \in \T$.  Then
\begin{align*}
[D_{t_0 \cup_{v_0} {\bf s}_0}^+,  D_{{\bf s}_0}^- ] p(d) \otimes 1 &= [ D_{t_0 \cup_{v_0} s}^+, D_s^-] p(d) \otimes 1\\
&= - \sum_{r \in \T} \alpha (s, t_0 \cup_{v_0} s, r) D_r^+ p(d) \otimes 1,
\end{align*}
 where $\alpha (s, t_0 \cup_{v_0} s, r)$ is the number of edges $e \in E( t_0 \cup_{v_0} s)$ such that $R_e (t_0 \cup_{v_0} s) = r$ and $P_e(t_0 \cup_{v_0} s) = s$.  If $e \in E(s) \subseteq E( t_0 \cup_{v_0} s)$, we get that $|P_e(t_0 \cup_{v_0} s)|<|s|$ and so $P_e(t_0 \cup_{v_0} s) \neq s$.  If $e \in E(t_0) \subseteq E(t_0 \cup_{v_0} s)$ is on a simple path from the root of $t_0$ to $v_0$, we get that $|P_e(t_0 \cup_{v_0} s)|>|s|$ and so $P_e(t_0 \cup_{v_0} s) \neq s$. For $e \in E(t_0) \subseteq E(t_0 \cup_{v_0} s)$ not on a path from the root of $t_0$ to $v_0$, we get that $\depth (r)> \depth (t_0)=M$ and so $D_r^+ p(d) \otimes 1=0$.  Thus the only possible nonzero terms in the sum correspond to the case that $e$ is the edge of $t_0 \cup_{v_0} s$ joining $v_0$ to the root of $s$, and $[ D_{t_0 \cup_{v_0} s}^+, D_s^-] = - D_{t_0}^+$.  Then 
$$[ D_{t_0 \cup_{v_0} s}^+, D_s^-] p(d) \otimes 1 =  -D_{t_0}^+ p(d) \otimes 1=-\Pi ( \lb s \rb ) D_{t_0}^+ p(d) \otimes 1.$$
This completes the base case.

Now suppose that $k = l({\bf s}_0) \geq 2$. Write ${{\bf s}_0} = \lb s_1, \ldots, s_k \rb$and  $\tilde {\bf s}_0 = \lb s_2, \ldots, s_k \rb \neq \emptyset$. Then
\begin{align*}
[D_{t_0 \cup_{v_0} {{\bf s}_0}}^+,  D_{{\bf s}_0}^-  p(d)]  \otimes 1 
&= [D_{t_0 \cup_{v_0} {\bf s}_0}^+, D_{s_1}^- ] D_{\tilde {\bf s}_0}^- p(d) \otimes 1  \\ 
& \quad + D_{s_1}^-  [D_{t_0 \cup_{v_0} {\bf s}_0}^+, D_{\tilde {\bf s}_0}^-  p(d) ] \otimes 1\\
&=[D_{t_0 \cup_{v_0} {\bf s}_0}^+, D_{s_1}^- ] D_{\tilde {\bf s}_0}^- p(d) \otimes 1
\end{align*}
since $l({\bf s}_0 ) > l( \tilde {\bf s}_0)$ and Lemma \ref{lem:depthBounded-0} thus implies $[D_{t_0 \cup_{v_0} {\bf s}_0}^+, D_{\tilde {\bf s}_0}^-  p(d) ] \otimes 1 = 0$.

Since $| t_0 \cup_{v_0} {\bf s}_0| > | {\bf s}_0| \ge |s_1|$, we have $[D_{t_0 \cup_{v_0} {\bf s}_0}^+, D_{s_1}^- ] = -\sum_{r \in \T} \alpha (s_1, t_0 \cup_{v_0} {\bf s}_0, r) D_r^+$, where $\alpha (s_1, t_0 \cup_{v_0} {\bf s}_0, r) $ is the number of edges $e \in E(t_0 \cup_{v_0} {\bf s}_0)$ such that $R_e(t_0 \cup_{v_0} {\bf s}_0) = r$ and $P_e( t_0 \cup_{v_0} {\bf s}_0 ) = s_1$.  Since $l ({\bf s}_0) > 1$, it follows that $\depth (r) > M$ and $\eta (D_r^+) = 0$ whenever $\alpha (s_1, t_0 \cup_{v_0} {\bf s}_0, r)  \neq 0$. Therefore, $D_r^+D_{\tilde {\bf s}_0}^- p(d) \otimes 1 = [D_r^+, D_{\tilde {\bf s}_0}^- p(d) ] \otimes 1 = [D_r^+, D_{\tilde {\bf s}_0}^- ] p(d) \otimes 1$, and we proceed by showing that 
$$[D_r^+, D_{\tilde {\bf s}_0}^- ] p(d) \otimes 1= 0$$
whenever $r = R_e ( t_0 \cup_{v_0} {\bf s}_0)$ and $s_1 = P_e( t_0 \cup_{v_0} {\bf s}_0)$, except in the case that $e$ joins the vertex $v_0$ to a tree in ${\bf s}_0$ that is isomorphic to $s_1$.  We consider the following cases for $e$:
\begin{enumerate}
\item[i.] Suppose $e \in E(t_0) \subseteq E( t_0 \cup_{v_0} {\bf s}_0)$.   If $e$ belongs to a simple path from the root of $t_0$ to $v_0$, then $| P_e (t_0 \cup_{v_0} {\bf s}_0 ) | > |s_1|$, which is impossible.  Thus $r$ has the form $t_0' \cup_{v_0} {\bf s}_0$, where $t_0' = R_e(t_0)$ and $\depth (t_0') = \depth (t_0)$.   Now $\depth (r) = \depth (t_0 \cup_{v_0} {\bf s}_0)$, so Lemma \ref{lem:depthBounded-0} implies $[D_r^+, D_{\tilde {\bf s}_0}^- p(d) ] \otimes 1 = 0$ since $l({\bf s}_0) > l(\tilde {\bf s}_0)$.

\item[ii.] Suppose $e \in E(s) \subseteq E(t_0 \cup_{v_0} {\bf s}_0)$ for some $s \in {\bf s}_0$ with $P_e(s) = s_1$.  In this case, we may write $r = t_0 \cup_{v_0} {\bf s}_0'$, where ${\bf s}_0'$ is a forest formed by replacing some $s \in {\bf s}_0$ with $R_e(s)$.   Then $| {\bf s}_0'| \ge | \tilde {\bf s}_0 |$ and $l( {\bf s}_0')  = l({\bf s}_0) > l(\tilde {\bf s}_0)$, so $[D_r^+, D_{\tilde {\bf s}_0}^- p(d) ] \otimes 1 = 0$ by Lemma \ref{lem:depthBounded-0}.

\item[iii.] Suppose $e$ is an edge joining $v_0 \in V(t_0)$ to $s_1$ (or a tree isomorphic to $s_1$ in ${\bf s}_0$).  Then $r = t_0 \cup_{v_0} \tilde {\bf s}_0$, and $\alpha (s_1, t_0 \cup_{v_0} {\bf s}_0, r)$ is exactly the multiplicity $\m_1$ of $s_1$ in ${\bf s}_0$.  Thus 
\begin{align*}
[D_{t_0 \cup_{v_0} {\bf s}_0}^+,  D_{{\bf s}_0}^- ] p(d) \otimes 1 &= -\m_1 D_{t_0 \cup_{v_0} \tilde {\bf s}_0}^+ D_{\tilde {\bf s}_0}^- p(d) \otimes 1 \\
&= -\m_1 [D_{t_0 \cup_{v_0} \tilde {\bf s}_0}^+, D_{\tilde {\bf s}_0}^- p(d) ] \otimes 1.
\end{align*}
Now $l(\tilde {\bf s}_0) < l ({\bf s}_0)$ and the multiplicity of $s_1$ in $\tilde{\bf s}_0$ is $\m_1-1$, so the result holds by induction.
\end{enumerate}
\end{proof}

\begin{thm}\label{thm:depthBndedSimp}
Let $\eta$ be a non-zero, depth-bounded homomorphism.  Then the space of Whittaker vectors of type $\eta$ in $M_\eta$ is one dimensional, and $M_\eta$ is simple.
\end{thm}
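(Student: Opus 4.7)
The plan is to invoke Proposition \ref{prop:Wdecomp}: if the space of Whittaker vectors in $M_\eta$ is one-dimensional, then $M_\eta$ is simple. Since the canonical generator $1 \otimes 1$ is manifestly a Whittaker vector, it suffices to show that every Whittaker vector is a scalar multiple of $1 \otimes 1$. Equivalently, for each $v \in M_\eta \setminus \cc(1 \otimes 1)$, I must produce some $u \in \n^+$ with $(u - \eta(u))v \neq 0$. Writing $v$ in the PBW basis as $v = \sum_{\bf s} D_{\bf s}^- p_{\bf s}(d) \otimes 1$ (a finite sum over forests), I would first dispense with the degenerate case where $p_{\bf s} = 0$ for every nonempty forest ${\bf s}$: then $v = p(d) \otimes 1$ with $\deg p > 0$, and Lemma \ref{lem:D+p(d)v}(ii) applied to any $t \in \T$ with $\eta(D_t^+) \neq 0$ (such a $t$ exists because $\eta \neq 0$) supplies the witness.

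For the main case, I would select, among the finitely many nonempty forests with $p_{\bf s} \neq 0$, a forest ${\bf s}_0$ that is maximal in the lexicographic order on $(|{\bf s}|, l({\bf s}))$---that is, ${\bf s}_0$ has the largest order, and among forests of that order it has the largest length. By construction, every other forest ${\bf s}$ appearing in $v$ satisfies one of the three conditions (1)--(3) of Lemma \ref{lem:depthBounded-0} relative to ${\bf s}_0$. Next, using the minimality of $M$, I would fix $t_0 \in \T$ with $\depth(t_0) = M$ and $\eta(D_{t_0}^+) \neq 0$, pick $v_0 \in V(t_0)$ at maximal distance from $\rt(t_0)$, and set $u = D_{t_0 \cup_{v_0} {\bf s}_0}^+$. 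Since $\depth(t_0 \cup_{v_0} {\bf s}_0) > M$, we have $\eta(u) = 0$, so
$$(u - \eta(u))v = \sum_{\bf s} [u, D_{\bf s}^- p_{\bf s}(d)] \otimes 1.$$
By Lemma \ref{lem:depthBounded-0}, each summand with ${\bf s} \neq {\bf s}_0$ vanishes, leaving only the term corresponding to ${\bf s}_0$, which by Lemma \ref{lem:depthBounded-non0} equals $(-1)^{l({\bf s}_0)} \Pi({\bf s}_0) D_{t_0}^+ p_{{\bf s}_0}(d) \otimes 1 = (-1)^{l({\bf s}_0)} \Pi({\bf s}_0) \eta(D_{t_0}^+) p_{{\bf s}_0}(d - |t_0|) \otimes 1$, which is nonzero because $\eta(D_{t_0}^+) \neq 0$ and $p_{{\bf s}_0} \neq 0$. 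This contradicts $v$ being a Whittaker vector, forcing the space of Whittaker vectors to be exactly $\cc(1 \otimes 1)$, and simplicity follows from Proposition \ref{prop:Wdecomp}.

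The hard combinatorial work has already been handled in Lemmas \ref{lem:depthBounded-0} and \ref{lem:depthBounded-non0}, which showed that the intricate tree brackets isolate exactly the ${\bf s} = {\bf s}_0$ contribution. The main subtlety remaining for the theorem itself is the bookkeeping step of verifying that the chosen lex order genuinely matches the hypotheses of Lemma \ref{lem:depthBounded-0}, so that \emph{every} non-maximal forest appearing in $v$ does fall under one of cases (1)--(3) and has its bracket annihilated, while the maximal forest ${\bf s}_0$ survives with nonzero leading coefficient coming from Lemma \ref{lem:depthBounded-non0}.
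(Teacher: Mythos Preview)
Your proposal is correct and follows essentially the same argument as the paper's proof: both choose ${\bf s}_0$ to have maximal order and then maximal length among forests appearing in $v$, apply Lemma \ref{lem:depthBounded-0} to kill every other summand, apply Lemma \ref{lem:depthBounded-non0} to the surviving term, and then use Lemma \ref{lem:D+p(d)v} to rewrite $D_{t_0}^+ p_{{\bf s}_0}(d) \otimes 1$ as $\eta(D_{t_0}^+) p_{{\bf s}_0}(d - |t_0|) \otimes 1$. Your phrasing in terms of the lexicographic order on $(|{\bf s}|, l({\bf s}))$ is exactly the paper's choice of $N = \max\{|{\bf s}|\}$ followed by maximal $l({\bf s}_0)$ within $\Lambda_N$, and your observation that cases (1)--(3) of Lemma \ref{lem:depthBounded-0} exhaust all remaining forests is precisely what the paper uses.
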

\begin{proof}
Let $v \in M_\eta$ with $v \not\in \cc 1 \otimes 1$; we show that $v$ is not a Whittaker vector. Write $v = \sum_{{\bf s}}  D_{\bf s}^- p_{\bf s}(d)\otimes 1$, and note that by Lemma \ref{lem:D+p(d)v} we may assume that there exists a forest ${\bf s}$ such that ${\bf s} \neq \emptyset$ and $p_{\bf s}(d) \neq 0$.

Let $M \in \z$ be minimal such that $\eta (D_t^+) = 0$ whenever $\depth (t) > M$, and let $t_0 \in \T$ with $\depth (t_0) = M$ and $\eta ( D_{t_0}^+ ) \neq 0$.  Assume $v_0 \in V(t_0)$ has maximal distance from the root of $t_0$.  Define $N = \max \{ | {\bf s}| \mid p_{\bf s}(d) \neq 0 \}>0$ and $\Lambda_N = \{ {\bf s} \mid p_{\bf s}(d) \neq 0, | {\bf s}| = N \}$, and choose ${\bf s}_0 \in \Lambda_N$ with $l ({\bf s}_0)$ maximal among elements of $\Lambda_N$. By Lemma \ref{lem:depthBounded-0} and Lemma \ref{lem:depthBounded-non0}, 
\begin{align*}
\left( D_{t_0 \cup_{v_0} {\bf s}_0}^+ - \eta ( D_{t_0 \cup_{v_0} {\bf s}_0}^+) \right) v &=  \sum_{\bf s} [ D_{t_0 \cup_{v_0} {\bf s}_0}^+ , D_{\bf s}^- p_{\bf s}(d)] \otimes 1  \\
&= (-1)^{l ({\bf s}_0)} \Pi ({\bf s}_0) D_{t_0}^+ p_{{\bf s}_0}(d) \otimes 1.
\end{align*}
Then by Lemma \ref{lem:D+p(d)v}, 
\begin{align*}
D_{t_0}^+ p_{{\bf s}_0}(d) \otimes 1 &= [D_{t_0}^+, p_{{\bf s}_0}(d)] \otimes 1 + \eta ( D_{t_0}^+ ) ( p_{{\bf s}_0}(d)) \otimes 1\\
&= \eta ( D_{t_0}^+ ) ( q(d) + p_{{\bf s}_0}(d)) \otimes 1\\
&=  \eta ( D_{t_0}^+ ) p_{{\bf s}_0}( d - |t_0|)  \otimes 1.
\end{align*}
Thus $$(D_{t_0 \cup_{v_0} {\bf s}_0}^+ - \eta (D_{t_0 \cup_{v_0} {\bf s}_0}^+))v = (-1)^{l ({\bf s}_0)}  \Pi ({\bf s}_0) \eta ( D_{t_0}^+) p_{{\bf s}_0}( d - |t_0|) \otimes 1.$$ In particular, $\left( D_{t_0 \cup_{v_0} {\bf s}_0}^+ - \eta ( D_{t_0 \cup_{v_0} {\bf s}_0}^+) \right) v \neq 0$; that is, $v$ is not a Whittaker vector.  The rest of result follows from Proposition \ref{prop:Wdecomp}.
\end{proof}

Every homomorphism $\eta : \n^+ \to \cc$ with finite support is clearly depth-bounded, so we also have the following.

\begin{cor} \label{cor:finitesupport}
If $\eta : \n^+ \to \cc$ has finite support and is nonzero, then $M_\eta$ is simple and has a one-dimensional space of Whittaker vectors.
\end{cor}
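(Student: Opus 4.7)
The plan is to observe that finite support is a strictly stronger condition than depth-boundedness, so the corollary reduces immediately to Theorem \ref{thm:depthBndedSimp}. Specifically, suppose $\eta : \n^+ \to \cc$ has finite support, so there exists $M \in \z_{>0}$ with $\eta(D_t^+) = 0$ whenever $|t| \ge M$. For any tree $t$ one has $|t| \ge \depth(t) + 1$, since a simple path from $\rt(t)$ realizing the depth visits $\depth(t) + 1$ distinct vertices. Hence if $\depth(t) \ge M$, then $|t| \ge M + 1 > M$, which forces $\eta(D_t^+) = 0$. Therefore $\eta$ is depth-bounded (with the same constant $M$).

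Having made this observation, I would simply cite Theorem \ref{thm:depthBndedSimp} applied to $\eta$, which yields both conclusions at once: the space of Whittaker vectors in $M_\eta$ is one-dimensional, and $M_\eta$ is simple. There is no additional content beyond the implication ``finite support $\Longrightarrow$ depth-bounded'' noted in the paragraph of the paper just after the definitions of these conditions.

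There is essentially no obstacle here. The only thing worth being careful about is the direction of the inequality relating $|t|$ and $\depth(t)$, and the trivial fact that one may reuse the same bound $M$. I would phrase the whole corollary in one or two sentences so as not to overstate the difficulty.

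\begin{proof}
If $\eta$ has finite support with bound $M$, that is, $\eta(D_t^+) = 0$ whenever $|t| \ge M$, then since $|t| \ge \depth(t) + 1$ for every $t \in \T$, we have $\eta(D_t^+) = 0$ whenever $\depth(t) \ge M$. Thus $\eta$ is depth-bounded, and the result follows from Theorem \ref{thm:depthBndedSimp}.
\end{proof}
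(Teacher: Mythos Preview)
Your proof is correct and matches the paper's approach exactly: the paper simply remarks that every finitely supported homomorphism is depth-bounded and invokes Theorem \ref{thm:depthBndedSimp}. Your added justification of the inequality $|t|\ge \depth(t)+1$ is a harmless elaboration of what the paper leaves implicit.
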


\subsection{The homomorphism $\eta$ is root-bounded} \label{sec:rootBounded}
We now study $M_\eta$ in the case that $\eta : \n^+ \to \cc$ is root bounded and nonzero.  In light of Corollary \ref{cor:finitesupport}, in this section we focus on root-bounded $\eta$ with infinite support.  We first introduce some new notation and present several computational lemmas; the main results, following the argument sketched at the beginning of the section, appear in Lemma \ref{lem:actionOrderRootBnd} and Theorem \ref{thm:rootBoundedSimp}.

Suppose that $\eta$ is root-bounded, and let $0 < R \in \z$ (not necessarily minimal) such that $\eta (D_t^+) = 0$ whenever $\rootdeg (t) > R$.  For each $n \in \z_{> 0}$, define $\mathcal S_n = \{ |t| \mid \rootdeg (t) = n, \eta (D_t^+) \neq 0 \}$. Then let $\omega_n \in \z_{\ge 0} \cup \{ \infty \}$  be defined by
$$\omega_n = \left\{ \begin{array}{ll}  0 & \text{if $\mathcal S_n= \emptyset$ (i.e. if $\eta (D_t^+) = 0$ whenever $\rootdeg (t) = n$),}  \\ 
M & \text{if $\mathcal S_n \neq \emptyset$ has a maximal element $M \in \z$,} \\ 
\infty & \text{if $\mathcal S_n \neq \emptyset$ does not contain a maximal element in $\z$.}  \end{array} \right.$$
For $\omega_n <\infty$, we may regard $\omega_n$ as the size of the largest tree $t$ of root degree $n$ such that $\eta (D_t^+) \neq 0$.  In particular, $\omega_n = 0$ whenever $n > R$.

If $\eta$ is root-bounded with infinite support, there is some $n > 0$ such that $\omega_n = \infty$. On the other hand, the set $\{ n \mid \omega_n = \infty \}$ is finite since $\omega_n = 0$ for $n >R$.  Thus we may define 
\begin{equation}\label{eqn:def_Reta}
R_\eta = \max \{ n \mid \omega_n = \infty \}.
\end{equation}
By definition, we have that $\omega_n$ is finite if $n > R_\eta$.  Moreover, the set $\{ \omega_n \mid n > R_\eta \} \subseteq \z$ is finite, so we may define
\begin{equation}\label{eqn:def_Beta}
B_\eta = \max \{ \omega_n \mid n > R_\eta \}.
\end{equation}
Thus if $t \in \T$ with $\rootdeg (t) > R_\eta$ and $|t| > B_\eta$, it follows that $\eta (D_t^+) = 0$.  However, since $\omega_{R_\eta} = \infty$, we can choose $t \in \T$ with $\rootdeg (t) = R_\eta$ and $\eta (D_t^+) \neq 0$ such that $|t|$ is arbitrarily large.

\begin{lem} \label{lem:bigRootDegZero}
Fix a forest ${\bf s}$ and a polynomial $p(d)$.  If $u \in \T$ is such that $\rootdeg(u)>R_{\eta}+l({\bf s})$ and $|u|>B_{\eta}+|{\bf s}|$, then $D_u^+ D_{\bf s}^- p(d) \otimes 1 =[D_u^+ ,D_{\bf s}^- p(d)] \otimes 1 = [D_u^+ ,D_{\bf s}^- ] p(d) \otimes 1 = 0$. 
\end{lem}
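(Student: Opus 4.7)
My plan is to identify the main content of the lemma and reduce the three equalities to a single combinatorial claim, then prove that claim by induction on $l({\bf s})$. All three expressions differ only by terms in which $D_u^+$ hits the vacuum vector $1 \otimes 1$, so the starting observation will be that $\eta(D_u^+) = 0$ under the stated hypotheses.

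First I would note that $\rootdeg(u) > R_\eta$ and $|u| > B_\eta$ follow immediately from the hypotheses, so by the definitions (\ref{eqn:def_Reta}) and (\ref{eqn:def_Beta}) we have $\eta(D_u^+) = 0$. This yields the first equality $D_u^+ D_{\bf s}^- p(d) \otimes 1 = [D_u^+, D_{\bf s}^- p(d)] \otimes 1$, since the leftover term $D_{\bf s}^- p(d) D_u^+ \otimes 1$ equals $\eta(D_u^+) D_{\bf s}^- p(d) \otimes 1 = 0$. The second equality $[D_u^+, D_{\bf s}^- p(d)] \otimes 1 = [D_u^+, D_{\bf s}^-] p(d) \otimes 1$ would follow from the Leibniz rule together with Lemma \ref{lem:D+p(d)v}: the leftover term $D_{\bf s}^- [D_u^+, p(d)] \otimes 1 = D_{\bf s}^- q(d) D_u^+ \otimes 1$ again vanishes because $\eta(D_u^+) = 0$.

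The substance is the third claim $[D_u^+, D_{\bf s}^-] p(d) \otimes 1 = 0$, which I would prove by induction on $n = l({\bf s})$. The case $n=0$ is trivial. For $n \geq 1$, write $D_{\bf s}^- = D_{t_1}^- D_{\tilde{\bf s}}^-$ for some $t_1 \in \T$ and forest $\tilde{\bf s}$ with $l(\tilde{\bf s}) = n-1$, and split via the derivation property:
\begin{equation*}
[D_u^+, D_{\bf s}^-] = [D_u^+, D_{t_1}^-] D_{\tilde{\bf s}}^- + D_{t_1}^- [D_u^+, D_{\tilde{\bf s}}^-].
\end{equation*}
The second summand vanishes on $p(d) \otimes 1$ by the inductive hypothesis applied to $u$ and $\tilde{\bf s}$, since the hypotheses on $(u,\tilde{\bf s})$ are strictly weaker than on $(u,{\bf s})$. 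For the first summand I would expand $[D_u^+, D_{t_1}^-] = -\sum_r \alpha(t_1, u, r) D_r^+ - \sum_r \beta(t_1, u, r) D_r^-$ using the defining structure constants. The key observation is that $\beta(t_1, u, r) \neq 0$ would force $t_1 = r \cup_v u$ and hence $|u| \le |t_1|$; but the hypothesis gives $|u| > B_\eta + |{\bf s}| \ge |t_1|$, so all $\beta$-terms drop out. The remaining terms $\alpha(t_1, u, r) D_r^+ D_{\tilde{\bf s}}^- p(d) \otimes 1$ involve trees $r = R_e(u)$ with $|r| = |u| - |t_1|$ and $\rootdeg(r) \ge \rootdeg(u) - 1$, and a short bookkeeping check confirms that $(r,\tilde{\bf s})$ still meets the hypothesis of the lemma; the inductive hypothesis then closes the argument.

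The one non-routine step is the elimination of the $\beta$-terms from $[D_u^+, D_{t_1}^-]$; this is precisely why the hypothesis bounds $|u|$ by $B_\eta + |{\bf s}|$ rather than by $B_\eta$ alone. The rest is commutator arithmetic and tracking the two strict inequalities through the inductive step: $\rootdeg$ and $l$ each drop by one, and $|u|$ and $|{\bf s}|$ each drop by $|t_1|$, so the slack built into the hypothesis is used exactly.
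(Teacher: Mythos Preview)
Your proof is correct and follows essentially the same approach as the paper's: induction on $l({\bf s})$, splitting off one factor via the Leibniz rule, and using the size hypothesis $|u| > |t_1|$ to eliminate the $\beta$-terms from $[D_u^+, D_{t_1}^-]$ before applying the inductive hypothesis to the surviving $\alpha$-terms. The only cosmetic difference is that the paper suppresses the $\beta$-terms in one line (writing $[D_u^+, D_{s_1}^-] = -\sum_{u'}\alpha(s_1,u,u')D_{u'}^+$ directly once $|u|>|s_1|$ is noted), whereas you spell out that step explicitly.
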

\begin{proof}
Note that  $\eta(D_u^+)=0$ by our choice of $u$, so $D_u^+ D_{\bf s}^- p(d) \otimes 1 =[D_u^+ ,D_{\bf s}^- p(d)] \otimes 1 = [D_u^+ ,D_{\bf s}^- ] p(d) \otimes 1$.

The proof is by induction on $l({\bf s})$.  If $l({\bf s}) = 0$, then ${\bf s}$ is the empty forest and we have $D_u^+ p(d) \otimes 1=0$ by Lemma \ref{lem:D+p(d)v}.  Now assume $l({\bf s}) > 0$. Write ${\bf s} = \lb s_1, \ldots, s_k \rb$.  For $\tilde {\bf s} = {\bf s}\setminus \lb s_1 \rb$,
$$ 
[D_u^+,  D_{\bf s}^-] p(d)\otimes 1 =  [ D_u^+,  D_{s_1}^-] D_{\tilde {\bf s}}^- p(d)\otimes 1 + D_{s_1}^- [ D_u^+,  D_{\tilde {\bf s}}^- p(d)]\otimes 1.
$$
Since $l(\tilde {\bf s}) < l({\bf s})$, we have that $[ D_u^+,  D_{\tilde {\bf s}}^-  p(d)]\otimes 1 = 0$ by induction.   Also, $|u| > B_\eta + |{\bf s}| \ge |{\bf s}| \ge |s_1|$. This implies $[ D_u^+,  D_{s_1}^-] = -\sum_{u'} \alpha(s_1, u, u') D_{u'}^+$, where $\alpha(s_1, u, u')$ is the number of edges $e \in E(u)$ such that $R_e(u) = {u'}$ and $P_e(u) = s_1$.  For $u'$ such that $\alpha(s_1, u, u') \neq 0$, we have
$$\rootdeg ({u'}) \ge \rootdeg (u) - 1 > R_\eta + l({\bf s}) -1 = R_\eta + l( \tilde {\bf s}).$$
Moreover, $|{u'}| = |u| - |s_1| > B_\eta + |{\bf s}| - |s_1| = B_\eta + | \tilde {\bf s}|$. It then follows from induction that $D_{u'}^+ D_{\tilde {\bf s}}^- p(d)\otimes 1= [D_{u'}^+, D_{\tilde {\bf s}}^-] p(d) = 0$ whenever $\alpha (s_1, u, u') \neq 0$.  Therefore 
$$[D_u^+,  D_{\bf s}^- p(d)]\otimes 1 =  [ D_u^+,  D_{s_1}^-] D_{\tilde {\bf s}}^- p(d) \otimes 1 = \sum_{u'} \alpha (s_1, u, u') D_{u'}^+ D_{\tilde {\bf s}}^- p(d) \otimes 1 = 0,$$
as desired.
\end{proof}

\begin{lem} \label{lem:Ordering}
Assume $\eta$ is root-bounded with infinite support, and let ${\bf s}$ and ${\bf s}'$ be forests with $l ({\bf s}) = l({\bf s}') = k$ and ${\bf s}'=\lb s_1', \ldots, s_k' \rb$.   Let $t_0 \in \T$ such that $\rootdeg (t_0) = R_\eta$, and $|t_0| > B_\eta + |{\bf s}'|$, and let $p(d) \in \cc[d]$.  Then for a permutation $\pi$ of $\{ 1, \ldots, k \}$, 
$$[D_{t_0 \cup_{\rt (t_0)} {\bf s}}^+, D_{{\bf s}'}^- p(d)] \otimes 1 = [D_{t_0 \cup_{\rt (t_0)} {\bf s}}^+, D_{s_{\pi(1)}'}^- D_{s_{\pi(2)}'}^- \cdots D_{s_{\pi(k)}'}^-  p(d)] \otimes 1.$$
\end{lem}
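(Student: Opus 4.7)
The plan is to reduce to the case of an adjacent transposition via a telescoping argument. Since every permutation of $\{1, \ldots, k\}$ is a product of adjacent transpositions, it suffices to show that, for any sequence $t_1, \ldots, t_k \in \T$ whose underlying multiset equals ${\bf s}'$, and any $1 \le i < k$,
\begin{equation*}
[D_u^+,\ D_{t_1}^- \cdots D_{t_i}^- D_{t_{i+1}}^- \cdots D_{t_k}^-\, p(d)] \otimes 1 = [D_u^+,\ D_{t_1}^- \cdots D_{t_{i+1}}^- D_{t_i}^- \cdots D_{t_k}^-\, p(d)] \otimes 1,
\end{equation*}
where I abbreviate $u := t_0 \cup_{\rt(t_0)} {\bf s}$. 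The difference of the two sides equals $[D_u^+,\ D_{t_1}^- \cdots D_{t_{i-1}}^- [D_{t_i}^-, D_{t_{i+1}}^-] D_{t_{i+2}}^- \cdots D_{t_k}^-\, p(d)] \otimes 1$.

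Expanding the commutator via the defining relations of $\g$ gives $[D_{t_i}^-, D_{t_{i+1}}^-] = \sum_r c_r D_r^-$, where each $r$ appearing satisfies $|r| = |t_i| + |t_{i+1}|$. The difference thus becomes a linear combination of expressions of the form $[D_u^+,\ X\, p(d)] \otimes 1$, where $X \in U(\n^-)$ is a product of exactly $k - 1$ generators $D_{\cdot}^-$ whose total $|\cdot|$-degree equals $|{\bf s}'|$. I then straighten each such $X$, via the PBW theorem, into a linear combination $\sum_j \lambda_j D_{{\bf t}_j}^-$ of PBW-basis monomials; because $\n^-$ is $\z$-graded by $|\cdot|$ and commutators strictly reduce length, every forest ${\bf t}_j$ appearing must satisfy $l({\bf t}_j) \le k - 1$ and $|{\bf t}_j| = |{\bf s}'|$.

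The final step is to invoke Lemma \ref{lem:bigRootDegZero} on each canonical monomial to conclude $[D_u^+,\ D_{{\bf t}_j}^- p(d)] \otimes 1 = 0$. Both hypotheses of that lemma are readily verified:
\begin{align*}
\rootdeg(u) &= \rootdeg(t_0) + l({\bf s}) = R_\eta + k > R_\eta + l({\bf t}_j), \\
|u| &= |t_0| + |{\bf s}| > B_\eta + |{\bf s}'| + |{\bf s}| \ge B_\eta + |{\bf t}_j|,
\end{align*}
using the hypothesis $|t_0| > B_\eta + |{\bf s}'|$. Summing over all commutator terms shows that the difference vanishes, which completes the telescoping step and hence the lemma.

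The main obstacle is the bookkeeping in the middle step: once a commutator is expanded inside a longer product, the resulting expression need not be in PBW-canonical form, so one must track every monomial produced by straightening and verify the two inequalities needed for Lemma \ref{lem:bigRootDegZero} on each. The key observation that keeps this tractable is that brackets in $\n^-$ preserve the $|\cdot|$-grading while strictly lowering length, guaranteeing that every monomial generated still lies in the range where Lemma \ref{lem:bigRootDegZero} applies.
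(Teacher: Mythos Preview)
Your proof is correct and follows essentially the same approach as the paper. The paper's argument is slightly more condensed: it directly writes $D_{{\bf s}'}^- = D_{s_{\pi(1)}'}^- \cdots D_{s_{\pi(k)}'}^- + \sum_{l({\bf s}'') < k} \gamma_{{\bf s}''} D_{{\bf s}''}^-$ as a single PBW-straightening identity and then invokes Lemma~\ref{lem:bigRootDegZero} once on each lower-length term, whereas you unpack this identity via adjacent transpositions and explicitly track both the length and the $|\cdot|$-grading of the resulting monomials; the underlying idea and the key lemma are identical.
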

\begin{proof}
Note that
$$D_{{\bf s}'}^-  = D_{s_{\pi(1)}'}^- D_{s_{\pi(2)}'}^- \cdots D_{s_{\pi(k)}'}^- + \sum_{l({{\bf s}''}) < k} \gamma_{{\bf s}''} D_{{\bf s}''}^-$$
for $\gamma_{{\bf s}''} \in \cc$.  By Lemma \ref{lem:bigRootDegZero}, $[D_{t_0 \cup_{\rt (t_0)} {\bf s}}^+, D_{{\bf s}''}^- p(d)] \otimes 1 = 0$ since $l({{\bf s}''}) < l({\bf s})$.
\end{proof}

Recall that for $t \in \T$, $\C (t)$ represents the forest of components of $t$.

\begin{lem}\label{lem:partReplacement}
Assume $\eta$ is root-bounded with infinite support.   Let ${\bf s}, {{\bf s}'}$ be forests of $k$ rooted trees, $p(d)$ a polynomial in $d$, and $t \in \T$ with $\rootdeg (t) = R_\eta$ and $|t| > B_\eta + |{{\bf s}'}|$.  Let ${\bf w} = {\bf s} \cap {{\bf s}'}$.  
\begin{itemize}
\item[(i)]  If ${{\bf s}'}\setminus {\bf w} \subseteq \C (t)$, then 
$$[D_{t \, \cup_{\rt (t)} {\bf s}}^+, D_{{\bf s}'}^- p(d)] \otimes 1 = \xi \eta (D_{u_0}^+) q(d) \otimes 1,$$
where $0 \neq \xi \in \z$, $q(d)=p (d - (|t|+|{\bf s}|-|{\bf s}'|)) - p(d)$, and 
$$
u_0 =\bigcup_{r \in (\C (t) \setminus({{\bf s}'} \setminus {\bf w})) \cup({\bf s} \setminus {\bf w})} r. 
$$ 
In particular, $[D_{t \, \cup_{\rt (t)} {\bf s}}^+, D_{\bf s}^- p(d)] \otimes 1 = \xi \eta (D_{t}^+) q(d) \otimes 1$, where  $q(d)=p (d - |t|) - p(d)$.
\item[(ii)] If ${{\bf s}'}\setminus {\bf w} \not\subseteq \C (t)$, then $[D_{t \, \cup_{\rt (t)} {\bf s}}^+, D_{{\bf s}'}^- p(d)] \otimes 1 = 0$.
\end{itemize}
\end{lem}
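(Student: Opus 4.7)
The plan is to prove parts (i) and (ii) simultaneously by induction on $k = l({\bf s}')$. The base case $k = 0$ forces ${\bf s} = {\bf s}' = \emptyset$, so $u = t = u_0$, ${\bf w} = \emptyset$, case (ii) is vacuous, and the identity reduces to Lemma \ref{lem:D+p(d)v} with $\xi = 1$.

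For the inductive step with $k \geq 1$, use Lemma \ref{lem:Ordering} to place some chosen $s_1' \in {\bf s}'$ as the leftmost factor of $D_{{\bf s}'}^-$, set $\tilde{{\bf s}} = {\bf s}' \setminus \lb s_1' \rb$, and apply the Leibniz rule:
$$[D_u^+, D_{{\bf s}'}^- p(d)] \otimes 1 = [D_u^+, D_{s_1'}^-] D_{\tilde{{\bf s}}}^- p(d) \otimes 1 + D_{s_1'}^- [D_u^+, D_{\tilde{{\bf s}}}^- p(d)] \otimes 1.$$
Since $\rootdeg(u) = R_\eta + k > R_\eta + l(\tilde{{\bf s}})$ and $|u| > B_\eta + |\tilde{{\bf s}}|$, Lemma \ref{lem:bigRootDegZero} annihilates the second summand. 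Expanding $[D_u^+, D_{s_1'}^-] = -\sum_r \alpha(s_1', u, r) D_r^+$ (the $D_r^-$ terms vanish because $|u| > |s_1'|$) reduces the problem to evaluating $D_r^+ D_{\tilde{{\bf s}}}^- p(d) \otimes 1$ for each contributing edge $e \in E(u)$ with branch $s_1'$.

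I classify these edges by location in $u = t \cup_{\rt(t)} {\bf s}$: (a) $e$ lies interior to $t$ or to some tree of ${\bf s}$; (b) $e$ joins $\rt(t)$ to a component of $t$ equal to $s_1'$; (c) $e$ is a new edge joining $\rt(t)$ to a tree of ${\bf s}$ equal to $s_1'$. Case (a) gives $\rootdeg(r) = R_\eta + k$ with $|r| > B_\eta + |\tilde{{\bf s}}|$, so Lemma \ref{lem:bigRootDegZero} forces it to vanish. Cases (b) and (c) give $\rootdeg(r) = R_\eta + k - 1$, and $r$ admits a decomposition $r = t'' \cup_{\rt(t'')} {\bf s}''$ with $\rootdeg(t'') = R_\eta$, $|t''| > B_\eta + |\tilde{{\bf s}}|$, and $l({\bf s}'') = k - 1$, so the inductive hypothesis applies. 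A short multiset computation yields $\C(r) = \C(u) - \lb s_1' \rb$, so the intermediate tree produced recursively equals $\bigcup(\C(u) - {\bf s}') = u_0$ in case (i); moreover, the multiset condition ${\bf s}' \subseteq \C(u)$ characterizing case (i) is equivalent to $\tilde{{\bf s}} \subseteq \C(r)$, so the recursion remains in case (ii) whenever the original instance is.

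The main obstacle will be the combinatorial bookkeeping of the coefficient $\xi$: after summing over all surviving edges and iterating, $\xi$ is a signed product of falling factorials in the multiplicities of the distinct trees of ${\bf s}'$ relative to $\C(u) = \C(t) \cup {\bf s}$, with a common sign $(-1)^k$ that prevents cancellation. In case (i) every such factor is strictly positive (since ${\bf s}' \subseteq \C(u)$), so $\xi$ is a nonzero integer; in case (ii) at least one multiplicity exceeds what is available, so the product contains a zero factor and the whole bracket vanishes.
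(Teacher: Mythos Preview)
Your argument is correct and follows the same mechanism as the paper's proof: Leibniz rule, Lemma~\ref{lem:bigRootDegZero} to kill the second summand and all non--root-incident edges, then induction on $k$.  The paper organizes this as a two-phase reduction (first strip off the common part ${\bf w}={\bf s}\cap{\bf s}'$, keeping $t$ fixed; then, once ${\bf w}=\emptyset$, induct by swapping a component of $t$ isomorphic to $s_1'$ with a tree $s_1\in{\bf s}$), whereas you run a single unified induction and track everything through the identity $\C(r)=\C(u)\setminus\lb s_1'\rb$.  Your reformulation of case~(i) as the multiset condition ${\bf s}'\subseteq\C(u)$, and of $u_0$ as $\bigcup(\C(u)\setminus{\bf s}')$, is a clean way to see that the recursion is independent of the chosen decomposition and that the falling-factorial coefficient detects exactly the (i)/(ii) dichotomy.

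There is one step you should make explicit.  You assert that $r$ admits a decomposition $r=t''\cup_{\rt(t'')}{\bf s}''$ with $\rootdeg(t'')=R_\eta$, $l({\bf s}'')=k-1$, and $|t''|>B_\eta+|\tilde{\bf s}|$, but you do not say which decomposition.  In your case~(c) (where $s_1'\in{\bf s}$) one takes $t''=t$ and ${\bf s}''={\bf s}\setminus\lb s_1'\rb$, and the size inequality is immediate.  In your case~(b) (where $s_1'\in\C(t)$) one cannot keep $t''=t$; instead take any $s_j\in{\bf s}$, set $\C(t'')=(\C(t)\setminus\lb s_1'\rb)\cup\lb s_j\rb$ and ${\bf s}''={\bf s}\setminus\lb s_j\rb$, and then $|t''|=|t|-|s_1'|+|s_j|>B_\eta+|{\bf s}'|-|s_1'|=B_\eta+|\tilde{\bf s}|$.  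This swap is exactly what the paper does in its second phase, and it is the one point where the size hypothesis on $t$ needs to be checked rather than inherited.
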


\begin{proof}
Let $u=t \cup_{\rt (t)} {\bf s}$.  Write ${\bf s} = \lb s_1, \ldots, s_k \rb$ and ${{\bf s}'} = \lb s_1', \ldots, s_k' \rb$.

We first reduce to the case ${\bf w}= \emptyset$.   Suppose ${\bf w}$ is nonempty.  Using Lemma \ref{lem:Ordering}, we may assume that $s_1=s_1'$.  Write $\tilde {\bf s}  = {\bf s}\setminus \lb s_1 \rb$ and $\tilde {\bf s}'  = {\bf s}'\setminus \lb s_1 \rb$.  Using $\eta ( D_u^+) = 0$, we have
\begin{align}
 [D_u^+,  D_{{\bf s}'}^-p(d)] \otimes 1 
&=  [ D_u^+,  D_{s_1}^-] D_{\tilde {\bf s}'}^- p(d) \otimes 1 \label{eqn:partReplacement1} \\
& \ \ \ + D_{s_1}^- [ D_u^+,  D_{\tilde {\bf s}'}^-p(d) ] \otimes 1. \label{eqn:partReplacement2}
\end{align}
Since $\rootdeg (u) = R_\eta + l({\bf s}) > R_\eta + l( \tilde {\bf s}')$ and $|u| > |t| > B_\eta + |{\bf s}'| > B_\eta + | \tilde {\bf s}'|$, Lemma \ref{lem:bigRootDegZero} implies that  $[ D_u^+,  D_{\tilde {\bf s}'}^- p(d)] \otimes 1 = 0$ and hence (\ref{eqn:partReplacement2}) is zero.  

Now $|u| >  |s_1|$ and so $[ D_u^+,  D_{s_1}^-] =- \sum_{u'} \alpha(s_1, u, u') D_{u'}^+$, where $\alpha(s_1, u, u')$ is the number of edges $e \in E(u)$ such that $R_e(u) = {u'}$ and $P_e(u) = s_1$.  Clearly $\rootdeg ({u'}) \ge \rootdeg (u) - 1$ whenever $\alpha(s_1, u, u') \neq 0$.  Suppose that $e$ is an edge that is not incident on $\rt (t)$.  Then $\rootdeg ({u'}) = \rootdeg (u) = R_\eta + l({\bf s}') > R_\eta + l( \tilde {\bf s}')$ and $|{u'}| = |u| - |s_1| = |t_0| + | \tilde {\bf s}'| > B_\eta + |{\bf s}'| - |s_1| = B_\eta + |\tilde {\bf s}'|$.  In this case, Lemma \ref{lem:bigRootDegZero} implies that $D_{u'}^+ D_{\tilde {\bf s}'}^- p(d)\otimes 1 = 0$.  Thus 
$$[ D_u^+,  D_{s_1}^-] D_{\tilde {\bf s}'}^- p(d)\otimes 1 = -\alpha(s_1, u, \tilde u) D_{\tilde u}^+ D_{\tilde {\bf s}'}^- p(d)\otimes 1,$$
where $\tilde u = t \cup_{\rt (t)} \tilde {\bf s}$ and $\alpha(s_1, u, \tilde u)$ is the sum of the number of components of $t$ isomorphic to $s_1$ and the number of copies of $s_1$ in ${\bf s}$.  In particular, $\alpha(s_1, u, \tilde u) \geq 1$.
 By repeatedly applying this argument, we reduce to the case where ${\bf w}= \emptyset$.

We now consider the situation where ${\bf s}$ and ${{\bf s}'}$ share no common elements. We proceed by induction on $k = l({\bf s}) = l({\bf s}')$, noting that the case $k=0$ follows from Lemma \ref{lem:D+p(d)v}.  Therefore, suppose $k>0$. Using the same arguments as the first part of the proof, we have
\begin{align*}
[D_u^+, D_{{{\bf s}'}}^- p(d)] \otimes 1 
&= -\sum_{u'} \alpha(s_1', u, u') D_{u'}^+ D_{\tilde {\bf s}'}^- p(d)\otimes 1,
\end{align*}
where $\alpha(s_1', u, u')$ is the number of edges $e \in E(u)$ such that $R_e(u) = u'$ and $P_e(u) = s_1'$.   Note that if $e$ is not incident on $\rt (t)$, then $\rootdeg (u') = \rootdeg (t) + l({\bf s}) > R_\eta + l( \tilde {\bf s}')$ and
$$|u'| = |u| - |s_1'| > B_\eta + |{\bf s}| + |{{\bf s}'}| - | s_1'| = B_\eta + |{\bf s}| + | \tilde {\bf s}'|.$$
In this case, Lemma \ref{lem:bigRootDegZero} implies that $D_{u'}^+ D_{\tilde {\bf s}'}^- p(d)\otimes 1 = 0$.  

Now it remains to consider $u'=R_e(u)$ where $e$ is incident on $\rt (t)$.  Since ${\bf s}' \cap {\bf s} = \emptyset$, it follows that $\alpha(s_1', u, u') \neq 0$ only if $t$ contains a component isomorphic to $s_1'$ and $e$ is an edge connecting such a component of $t$ to the root $\rt (t)$.  Write $\C (t) = \lb t_1, \ldots, t_{R_\eta} \rb$ and assume (without loss) that $s_1' \cong t_1$, so that $u' = \tilde t \cup_{\rt (\tilde t)} \tilde {\bf s}$, where $\tilde t$ is the rooted tree formed from $t$ by replacing the component $t_1$ with $s_1$.  We now have 
$$D_{u'}^+ D_{\tilde {\bf s}'}^- p(d) \otimes 1 = D_{\tilde t \cup_{\rt (\tilde t)} \tilde {\bf s}}^+ D_{\tilde {\bf s}'}^- p(d) \otimes 1 = [D_{\tilde t \, \cup_{\rt (\tilde t)} \tilde {\bf s}}^+, D_{\tilde {\bf s}'}^- p(d)] \otimes 1,$$
where $\rootdeg (\tilde t) = R_\eta$, $|\tilde t| > B_\eta + |{\tilde {\bf s}'}|$.  The result now follows by induction.
\end{proof}

We require some new notation for Lemma \ref{lem:actionOrderRootBnd} and Theorem \ref{thm:rootBoundedSimp}.  Consider a forest ${\bf s}=(S, \m)$. Define $\iota ({\bf s})=|S|$ and write $S=\{s_1, \ldots, s_{\iota ({\bf s})} \}$ where we assume that $\m(s_1) \geq \m(s_2) \ge \cdots \ge \m(s_{\iota ({\bf s})})$.  This defines a partition $\lambda ({\bf s}) = ( \m(s_1), \ldots, \m(s_{\iota ({\bf s})}))$ of the integer $l ({\bf s})$.  We view partitions as comparable via the standard lexicographic order $\leq$.  Note that different forests can give rise to the same partition.   However, for a forest ${\bf s}'=(S', \m')$, if ${\bf s} \neq {\bf s}'$ and $\lambda ({\bf s}) \ge \lambda ({\bf s}')$, then there must exist $j$ such that $\m(s_j) > \m'(s_j)$ and $\m(s_i) = \m'(s_i)$ for $i< j$.

Now consider a vector $v = \sum_{\bf s} D_{\bf s}^- p_{\bf s}(d) \otimes 1 \in M_\eta$, and a fixed forest ${\bf s}_0 =(S_0, \m_0)$, where $S_0 =\{s_1, \ldots, s_{\iota ({\bf s}_0)}\}$.  Define 
\begin{align*}
\Lambda(v) &=\{ t\in \T \mid \rootdeg (t) \ge R_\eta, \ \eta (D_t^+) \neq 0, \  \mbox{$|t| > B_\eta + |{{\bf s}}|$ for all $p_{{\bf s}}(d) \neq 0$} \}\\
\Lambda_1^{{\bf s}_0} (v) &= \{ t \in \Lambda (v) \mid  \m_{\C (t)}(s_1) \geq  \m_{\C (t')}(s_1) \ \mbox{for all} \ t' \in \Lambda (v)\},\\ 
\Lambda_i^{{\bf s}_0} (v) &= \{ t \in \Lambda_{i-1}^{{\bf s}_0} (v) \mid \m_{\C (t)}(s_i) \geq  \m_{\C (t')}(s_i) \ \mbox{for} \ t' \in \Lambda_{i-1}^{{\bf s}_0} (v) \},  \  1<i \le \iota( {\bf s}_0),
\end{align*}
where $\m_{\C (t)}$ is the multiplicity function associated with the forest $\C (t)$.  Note that the condition $\rootdeg (t) \ge R_\eta$ in the definition of $\Lambda(v)$ is equivalent to the condition $\rootdeg (t) = R_\eta$, and $\Lambda (v) \neq \emptyset$ since $\eta$ has infinite support.

\begin{lem} \label{lem:actionOrderRootBnd}
Assume $\eta$ is root-bounded with infinite support.  Let  ${\bf s}_0, {\bf s}$ be forests such that ${\bf s}_0 \neq {\bf s}$, $l ({\bf s}_0) = l ({\bf s})$, $|{\bf s}_0| \ge |{\bf s}|$, and $\lambda ( {\bf s}_0) \ge \lambda ({\bf s})$; and let $p(d) \in \cc[d]$.
Then for $v=D_{\bf s}^+ p(d) \otimes 1$ and $t \in \Lambda^{{\bf s}_0}_{\iota ({\bf s}_0)} (v)$,
$$[D_{t \, \cup_{\rt (t)} {\bf s}_0}^+, D_{{\bf s}}^- p(d)] \otimes 1 = 0.$$
\end{lem}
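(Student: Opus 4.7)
The plan is to apply Lemma~\ref{lem:partReplacement} and then use the maximality built into the definition of $\Lambda^{{\bf s}_0}_{\iota({\bf s}_0)}(v)$ to force the resulting ``replacement tree'' $u_0$ to satisfy $\eta(D_{u_0}^+)=0$. Set ${\bf w}={\bf s}_0\cap{\bf s}$. Applied with (its) ${\bf s}\mapsto{\bf s}_0$ and (its) ${\bf s}'\mapsto{\bf s}$, Lemma~\ref{lem:partReplacement} tells us that either ${\bf s}\setminus{\bf w}\not\subseteq\C(t)$, in which case the bracket vanishes by part~(ii), or else
\[
[D_{t\,\cup_{\rt(t)}{\bf s}_0}^+,D_{\bf s}^-p(d)]\otimes 1 \;=\; \xi\,\eta(D_{u_0}^+)\,q(d)\otimes 1,
\]
where $u_0=\bigcup_{r\in(\C(t)\setminus({\bf s}\setminus{\bf w}))\cup({\bf s}_0\setminus{\bf w})}r$ and $q(d)=p(d-(|t|+|{\bf s}_0|-|{\bf s}|))-p(d)$. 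Since $q(d)$ need not vanish, it suffices to verify that $\eta(D_{u_0}^+)=0$.

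The next step is a multiset computation: for every rooted tree $r$,
\[
\m_{\C(u_0)}(r)\;=\;\m_{\C(t)}(r)+\m_{{\bf s}_0}(r)-\m_{\bf s}(r),
\]
since the signed difference $\m_{{\bf s}_0\setminus{\bf w}}(r)-\m_{{\bf s}\setminus{\bf w}}(r)$ is always equal to $\m_{{\bf s}_0}(r)-\m_{\bf s}(r)$ once ${\bf w}={\bf s}_0\cap{\bf s}$. Summing over $r$, the identity $l({\bf s}_0)=l({\bf s})$ gives $\rootdeg(u_0)=R_\eta$, while $|{\bf s}_0|\ge|{\bf s}|$ and $t\in\Lambda(v)$ give $|u_0|=|t|+|{\bf s}_0|-|{\bf s}|\ge|t|>B_\eta+|{\bf s}|$. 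Consequently, if $\eta(D_{u_0}^+)\neq 0$, then $u_0\in\Lambda(v)$.

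To finish, I invoke the lexicographic hypothesis. Since ${\bf s}_0\neq{\bf s}$ and $\lambda({\bf s}_0)\ge\lambda({\bf s})$, there is an index $j$ (with respect to the ordering $s_1,\ldots,s_{\iota({\bf s}_0)}$ used in the definition of $\Lambda_i^{{\bf s}_0}(v)$) such that $\m_{{\bf s}_0}(s_i)=\m_{\bf s}(s_i)$ for $i<j$ and $\m_{{\bf s}_0}(s_j)>\m_{\bf s}(s_j)$. The multiplicity formula then gives $\m_{\C(u_0)}(s_i)=\m_{\C(t)}(s_i)$ for $i<j$ and $\m_{\C(u_0)}(s_j)>\m_{\C(t)}(s_j)$. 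A short induction on $i<j$ shows $u_0\in\Lambda_{j-1}^{{\bf s}_0}(v)$ (at each stage $u_0$ matches $t$'s multiplicity of $s_i$, and $t$ realizes the maximum by definition of $\Lambda_i^{{\bf s}_0}(v)$). But then $t\in\Lambda_j^{{\bf s}_0}(v)$ forces $\m_{\C(u_0)}(s_j)\le\m_{\C(t)}(s_j)$, contradicting the strict inequality. Hence $\eta(D_{u_0}^+)=0$, as required.

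The main technical obstacle is the multiset bookkeeping: one must correctly track how ${\bf s}_0\setminus{\bf w}$ and ${\bf s}\setminus{\bf w}$ interact with $\C(t)$ to get the clean formula $\m_{\C(u_0)}(r)=\m_{\C(t)}(r)+\m_{{\bf s}_0}(r)-\m_{\bf s}(r)$, and then ensure $u_0$'s invariants $(\rootdeg,|\cdot|)$ are large enough to place $u_0$ in $\Lambda(v)$ using the hypotheses $l({\bf s}_0)=l({\bf s})$ and $|{\bf s}_0|\ge|{\bf s}|$. Once this is in hand, the contradiction with the nested maximality in $\Lambda_{\iota({\bf s}_0)}^{{\bf s}_0}(v)$ is immediate.
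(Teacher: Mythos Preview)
Your proof is correct and follows essentially the same approach as the paper's: apply Lemma~\ref{lem:partReplacement}, identify the component multiplicities of the resulting tree $u_0$, and use the nested maximality built into $\Lambda_j^{{\bf s}_0}(v)$ to force $\eta(D_{u_0}^+)=0$. The only difference is that you spell out the multiplicity formula $\m_{\C(u_0)}(r)=\m_{\C(t)}(r)+\m_{{\bf s}_0}(r)-\m_{\bf s}(r)$ and the inductive verification that $u_0\in\Lambda_{j-1}^{{\bf s}_0}(v)$ more explicitly than the paper does.
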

\begin{proof}
Write ${\bf s}_0=(S_0, \m_0)$, $ {\bf s}=(S, \m)$, and $S_{0}=\{s_1, \ldots, s_{\iota ({\bf s}_0)}\}$, with $\m_{0}( s_1) \ge \cdots \ge \m_{0}(s_{\iota({\bf s}_0)})$ as in (\ref{eqn:forestSetMult}).  Because $\lambda ({\bf s}_0) \ge \lambda ({\bf s})$, there exists $j$ such that $\m_{0}(s_i)= \m(s_i)$ for $i<j$ and $\m_{0}(s_j)>\m(s_j)$.  Thus by Lemma \ref{lem:partReplacement}, either $[D_{t \, \cup_{\rt (t)} {\bf s}_0}^+, D_{{\bf s}}^-p(d)] \otimes 1 = 0$ or $[D_{t \, \cup_{\rt (t)} {\bf s}_0}^+, D_{{\bf s}}^-p(d)] \otimes 1 = \xi \eta (D_u^+) q(d) \otimes 1$, where $0 \neq \xi \in \z$, $q(d) \in \cc [d]$, and $u \in \T$ has the property that 
$$
\m_{\C (u)}(s_i) =  \m_{\C (t)}(s_i) \ \mbox{for $i<j$} \ \mbox{and} \ \m_{\C (u)}(s_j) > \m_{\C (t)}(s_j)
$$
for some $j \in \z$.
Note that $|u| = |t| + |{\bf s}_0| - |{\bf s}| > B_\eta + |{\bf s}| + |{\bf s}_0| - |{\bf s}| = B_\eta + |{\bf s}_0| \ge B_\eta + |{\bf s}|$ and $\rootdeg (u) \ge \rootdeg (t)$.  If $\eta (D_u^+) \neq 0$, then $u \in \Lambda_{j-1}^{{\bf s}_0}(v)$ and thus $\m_{\C (u)} (s_j) > \m_{\C (u)} (s_j)$ contradicts $t \in \Lambda_{j}^{{\bf s}_0}(v)$.  Therefore it must be that $\eta (D_u^+) = 0$, and consequently $[D_{t \, \cup_{\rt (t)} {\bf s}_0}^+, D_{{\bf s}}^-p(d)] \otimes 1 = \xi \eta (D_u^+) q(d) \otimes 1 = 0$.
\end{proof}

\begin{thm}\label{thm:rootBoundedSimp}
Let $\eta$ be a non-zero, root-bounded homomorphism.  Then the space of Whittaker vectors of type $\eta$ in $M_\eta$ is one dimensional, and $M_\eta$ is simple.
\end{thm}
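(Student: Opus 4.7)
My plan is to follow the template of Theorem \ref{thm:depthBndedSimp}: I will show that every vector $v \in M_\eta \setminus \cc(1 \otimes 1)$ fails to be a Whittaker vector, so that Proposition \ref{prop:Wdecomp} delivers simplicity and one-dimensionality of the Whittaker space. The finite-support case is handled by Corollary \ref{cor:finitesupport}, so I may assume $\eta$ is root-bounded with infinite support; then $R_\eta$ and $B_\eta$ from (\ref{eqn:def_Reta})--(\ref{eqn:def_Beta}) are defined and $\omega_{R_\eta} = \infty$ supplies arbitrarily large trees $t$ with $\rootdeg(t) = R_\eta$ and $\eta(D_t^+) \neq 0$.

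Given $v = \sum_{\bf s} D_{\bf s}^- p_{\bf s}(d) \otimes 1$, the case where only $p_\emptyset$ is nonzero forces $\deg p_\emptyset > 0$, and then Lemma \ref{lem:D+p(d)v}(ii) applied to any $t$ with $\eta(D_t^+) \neq 0$ already produces a nonzero vector. Otherwise, I will single out a ``leading forest'' ${\bf s}_0$ with $p_{{\bf s}_0}(d) \neq 0$ that is maximal in an ordering refining the primary invariant $l({\bf s})$, followed by $|{\bf s}|$ and then the partition $\lambda({\bf s})$. The ordering is picked with Lemma \ref{lem:actionOrderRootBnd} in mind: the goal is to arrange that every competing ${\bf s}$ of length $l({\bf s}_0)$ in the support of $v$ simultaneously satisfies $|{\bf s}_0| \geq |{\bf s}|$ and $\lambda({\bf s}_0) \geq \lambda({\bf s})$.

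With ${\bf s}_0$ chosen I will take $t \in \Lambda_{\iota({\bf s}_0)}^{{\bf s}_0}(v)$, nonempty by $\omega_{R_\eta} = \infty$, and form the test element $D_u^+ - \eta(D_u^+)$ for $u = t \cup_{\rt(t)} {\bf s}_0$. Since $\rootdeg(u) = R_\eta + l({\bf s}_0) > R_\eta$ and $|u| > B_\eta$, we have $\eta(D_u^+) = 0$, so the commutator lemma preceding \ref{lem:D+p(d)v} reduces the action on each summand to $[D_u^+, D_{\bf s}^- p_{\bf s}(d)] \otimes 1$. Summands with $l({\bf s}) < l({\bf s}_0)$ vanish by Lemma \ref{lem:bigRootDegZero} (as $\rootdeg(u) = R_\eta + l({\bf s}_0) > R_\eta + l({\bf s})$ and $|u| > B_\eta + |{\bf s}|$); summands with $l({\bf s}) = l({\bf s}_0)$ and ${\bf s} \neq {\bf s}_0$ vanish by Lemma \ref{lem:actionOrderRootBnd}; and the ${\bf s}_0$ summand yields a nonzero multiple of $\eta(D_t^+)$ against a polynomial in $d$ via Lemma \ref{lem:partReplacement}(i). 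Consequently $(D_u^+ - \eta(D_u^+))v \neq 0$ and $v$ is not a Whittaker vector.

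The hardest part will be verifying that this scheme really works without any hidden contributions. In particular, the choice of $t$ inside the nested $\Lambda_i^{{\bf s}_0}(v)$ must guarantee that for any non-leading forest ${\bf s}$ of length $l({\bf s}_0)$ with ${\bf s}\setminus({\bf s}\cap{\bf s}_0) \subseteq \C(t)$, the tree $u_0$ appearing in Lemma \ref{lem:partReplacement}(i) satisfies $\eta(D_{u_0}^+) = 0$; this is precisely what the iterative construction of the $\Lambda_i^{{\bf s}_0}(v)$ is designed to enforce. Assembling these matching conditions together with the compatibility of the mixed $(|\cdot|, \lambda(\cdot))$ ordering on forests will be the most delicate bookkeeping step; once it is in place, the rest of the argument is a straightforward synthesis of the three preceding lemmas.
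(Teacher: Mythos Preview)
Your plan matches the paper's proof exactly: reduce to infinite support via Corollary~\ref{cor:finitesupport}, select ${\bf s}_0$ maximal under the lexicographic ordering on $(l, |\cdot|, \lambda)$, pick $t \in \Lambda_{\iota({\bf s}_0)}^{{\bf s}_0}(v)$, and split the action of $D_{t \cup_{\rt(t)} {\bf s}_0}^+$ on $v$ into three pieces handled respectively by Lemmas~\ref{lem:bigRootDegZero}, \ref{lem:actionOrderRootBnd}, and \ref{lem:partReplacement}(i). The bookkeeping concern you flag about the compatibility of the $(|\cdot|,\lambda(\cdot))$ ordering with the hypotheses of Lemma~\ref{lem:actionOrderRootBnd} is precisely the point the paper passes over by direct citation of that lemma.
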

\begin{proof}
Given Corollary \ref{cor:finitesupport}, it is enough to prove the result in the case that $\eta$ has infinite support.  

Let $v \in M_\eta \setminus \cc 1 \otimes 1$; we show that $v$ is not a Whittaker vector. Write $v = \sum_{{\bf s}}  D_{\bf s}^- p_{\bf s}(d)\otimes 1$, and note that by Lemma \ref{lem:D+p(d)v} we may assume that there exists at least one nonempty forest ${\bf s}$ with $p_{\bf s}(d) \neq 0$.

Let $k \in \z_{>0}$ be maximal such that $k=l({\bf s})$ for some ${\bf s}$ with $p_{\bf s}(d) \neq 0$, and let $M = \max \{ |{\bf s}| \mid p_{\bf s}(d) \neq 0, \, l( {\bf s}) = k \}$.  Now choose ${\bf s}_0$ such that $\lambda({\bf s_0})$ is maximal (under the lexicographic ordering) among all forests in 
$$\{ {\bf s} \mid p_{\bf s}(d) \neq 0, \, l({\bf s}) = k, \, |{\bf s}| = M \}.$$

Let $t \in \Lambda_{\iota({\bf s_0})}^{\bf s_0}(v)$.  We now have 
\begin{align}
\left( D_{t \, \cup_{\rt (t)} {\bf s}_0}^+ - \eta (D_{t \, \cup_{\rt (t)} {\bf s}_0}^+) \right) v &= \sum_{{\bf s}} [D_{t \, \cup_{\rt (t)} {\bf s}_0}^+, D_{{\bf s}}^-p_{\bf s}(d)] \otimes 1 \nonumber \\ 
&=  [D_{t \, \cup_{\rt (t)} {\bf s}_0}^+, D_{{\bf s}_0}^-p_{\bf s_0}(d)] \otimes 1 \nonumber \\
&\ \ + \sum_{l({\bf s})=l({\bf s_0}), |{\bf s}_0| \ge |{\bf s}|,  {{\bf s}} \neq {\bf s}_0} [D_{t \, \cup_{\rt (t)} {\bf s}_0}^+, D_{{\bf s}}^-p_{\bf s}(d)] \otimes 1 \label{Sum1}\\
&\ \ + \sum_{l({\bf s})<l({\bf s_0})} [D_{t \, \cup_{\rt (t)} {\bf s}_0}^+, D_{{\bf s}}^-p_{\bf s}(d)] \otimes 1 \label{Sum2} \\
&=  [D_{t \, \cup_{\rt (t)} {\bf s}_0}^+, D_{{\bf s}_0}^-p_{\bf s_0}(d)] \otimes 1, \nonumber
\end{align}
where (\ref{Sum1}) is zero by Lemma \ref{lem:actionOrderRootBnd} and (\ref{Sum2}) is zero by Lemma \ref{lem:bigRootDegZero} since $\rootdeg (t) \ge R_\eta$.  Lemma \ref{lem:partReplacement} now implies
\begin{align*}
\left( D_{t \, \cup_{\rt (t)} {\bf s}_0}^+ - \eta (D_{t \, \cup_{\rt (t)} {\bf s}_0}^+) \right) v &=  [D_{t \, \cup_{\rt (t)} {\bf s}_0}^+, D_{{\bf s}_0}^-p_{\bf s_0}(d) ] \otimes 1 \\
&= \xi \eta (D_{t}^+) (p_{\bf s_0}(d-|t|)-p(d))\otimes 1 \quad (0 \neq \xi \in \cc ) \\
& \neq 0.
\end{align*}
Therefore $v$ is not a Whittaker vector, and so the Whittaker vectors of $M_\eta$ are precisely the elements of $\cc 1 \otimes 1$ as desired.
\end{proof}

\subsection{The zero homomorphism}
We also briefly consider the zero homomorphism: ${\bf 0}: \n^+ \rightarrow \cc$ where ${\bf 0}(x) =0$ for all $x \in \n^+$.  Lemma \ref{lem:D+p(d)v} implies that, for any $0 \neq p(d) \in \cc[d]$, the vector $p(d) \otimes 1$ generates a proper submodule of $M_{\bf 0}$.   In particular,  if $p(d) = d- \xi$ for some $\xi \in \cc$, then $M_{\bf 0} / (U(\g) p(d) \otimes 1)$ is isomorphic to the Verma module $M(\xi)$ of highest weight $\xi$. Thus, the simple highest weight modules $L(\xi)$ are quotients of $M_{\bf 0}$ as well. (Though it is shown in Theorem 3.2 of \cite{S} that $M(\xi)$ is usually simple, it is not generally known for which $\xi$ the module $M(\xi)$ is simple.) Moreover, it follows from the arguments found in \cite{OW2013} that any simple object in $\W_{\bf 0}$ (see notation in Section \ref{sec:Whittakerbackground}) is isomorphic to $L(\xi)$.

\section{Connection to Verma modules and central characters}\label{sec:Vermas}

In this section, we discuss connections between Whittaker modules and central characters.  Corollary \ref{cor:cenCharInsElim} uses the existence of a simple standard Whittaker module for the insertion-elimination Lie algebra to imply that all Verma modules admit the same central character.

Since it poses no additional difficulties, we begin by working in a more general context than that of the insertion-elimination Lie algebra.  Let $\g = \n^- \oplus \h \oplus \n^+$ denote a Lie algebra with a triangular decomposition as in \cite{MP95}, and let $Z(\g)$ denote the center of the universal enveloping algebra $U(\g)$.   For $\mu \in \h^*$, define (as in Section 4.2 of \cite{BatMaz} and Section 2.3 of \cite{MP95}) the highest weight Verma module $M(\mu)$ by 
$$M(\mu) := U(\g) \otimes_{U(\h \oplus \n^+)} \cc_\mu,$$ 
where $\cc_\mu$ is the 1-dimensional $\h \oplus \n^+$-module given by $(h + n).1 = \mu(h)$ for $h \in \h$ and $n \in \n^+$.  The lowest-weight Verma module $N(\mu) = U(\g) \otimes_{U(\n^- \oplus \h)} \cc_\mu$ is defined similarly.   We continue to let $M_\eta$ denote the standard Whittaker module corresponding to $\eta : \n^+ \to \cc$.

Let $Z( \g )$ denote the center of the universal enveloping algebra $U(\g)$.  We say that a $\g$-module $V$ admits a central character $\chi : Z(\g) \to \cc$ if we have $zv = \chi (z) v$ for every $z \in Z(\g)$ and $v \in V$.  For $\mu \in \h^*$, it is a standard result that the Verma module $M(\mu)$ admits a central character, and we let 
$$\chi_\mu : Z(\g) \to \cc$$ 
denote the central character corresponding to $M(\mu)$.  

\begin{lem}
Let $\g = \n^- \oplus \h \oplus \n^+$ be a Lie algebra with a triangular decomposition.  For $\mu \in \h^*$, the module $N(-\mu)^*$ has central character $\chi_\mu$.
\end{lem}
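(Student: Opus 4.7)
The plan is to exhibit a highest weight vector of weight $\mu$ in $N(-\mu)^*$ and then show that $Z(\g)$ acts on all of $N(-\mu)^*$ by a single scalar; evaluating that scalar on the highest weight vector will force the central character of $N(-\mu)^*$ to equal $\chi_\mu$.

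First I would verify that $N(-\mu)$ itself admits a central character. The module $N(-\mu)$ is cyclic with generator the lowest weight vector $v_{-\mu}$, and its weights all have the form $-\mu + \beta$ where $\beta$ is a sum (possibly empty) of $\h$-weights of $\n^+$. In particular the $-\mu$-weight space is one-dimensional and spanned by $v_{-\mu}$, so for each $z \in Z(\g)$ there is a scalar $c(z)$ with $z \cdot v_{-\mu} = c(z) v_{-\mu}$; by cyclicity $z$ acts as $c(z) \cdot \mathrm{id}$ on all of $N(-\mu)$.

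Next I would construct a highest weight vector of weight $\mu$ in $N(-\mu)^*$, which I equip with the standard dual action $(x \cdot f)(v) = -f(xv)$. Define $f_0 \in N(-\mu)^*$ to be the linear functional with $f_0(v_{-\mu}) = 1$ that vanishes on every weight space of $N(-\mu)$ of weight different from $-\mu$. A direct weight-space computation gives $h \cdot f_0 = \mu(h) f_0$ for $h \in \h$, and for $x \in \n^+$ homogeneous of nonzero weight $\alpha$ and any weight vector $v \in N(-\mu)$, the vector $xv$ is of weight $\neq -\mu$, so $f_0(xv) = 0$; hence $x \cdot f_0 = 0$. By the universal property of $M(\mu)$, the cyclic submodule $U(\g) \cdot f_0 \subseteq N(-\mu)^*$ is therefore a homomorphic image of $M(\mu)$, and $Z(\g)$ acts on $f_0$ by $\chi_\mu$.

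To finish, I would show that $Z(\g)$ acts on all of $N(-\mu)^*$ by a single scalar. Let $\tau : U(\g) \to U(\g)$ denote the anti-automorphism extending $x \mapsto -x$ on $\g$; then the dual action satisfies $(z \cdot f)(v) = f(\tau(z) v)$ for $z \in Z(\g)$, and since $\tau(z)$ is again central in $U(\g)$, it acts on $N(-\mu)$ by a scalar $c(\tau(z))$. This yields $z \cdot f = c(\tau(z)) f$ for every $f \in N(-\mu)^*$, so $N(-\mu)^*$ admits the central character $z \mapsto c(\tau(z))$, and comparison with the value on $f_0$ forces this character to equal $\chi_\mu$. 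The only mildly subtle point is securing that $N(-\mu)^*$ itself admits a central character, which is handled by this transport through $\tau$; everything else reduces to a direct weight-space calculation together with the universal property of Verma modules.
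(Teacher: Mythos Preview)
Your proof is correct and follows essentially the same route as the paper: both arguments exhibit the functional $f_0$ (the paper's $f$) as a highest weight vector of weight $\mu$ in $N(-\mu)^*$, so that $U(\g)f_0$ carries the central character $\chi_\mu$, and then conclude by observing that $N(-\mu)^*$ already admits a single central character. The only difference is expository: the paper simply asserts that $N(-\mu)^*$ inherits a central character from $N(-\mu)$, while you justify this explicitly via the principal anti-automorphism $\tau$.
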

\begin{proof}
Since $N(-\mu)$ is a lowest weight module, it must admit a central character, and therefore $N(-\mu)^*$ must also admit a central character $\chi$.   Let $f$ be the unique element of $N(-\mu)^*$ sending $1 \otimes 1$ to $1$ and sending every other weight space to 0.  It is clear that $f$ is a highest weight vector of weight $\mu$.  Therefore $U(\g) f \subseteq N(-\mu)^*$ is a highest weight module of highest weight $\mu$ and must admit the central character $\chi_\mu$.  But $U(\g) f$ is a submodule of $N(-\mu)^*$, which has a central character $\chi$, so it follows that $\chi = \chi_\mu$.
\end{proof}

\begin{prop}\label{prop:cenCharWhiVerm}
Let $\g = \n^- \oplus \h \oplus \n^+$ be a Lie algebra with a triangular decomposition.  Let $\mu \in \h^*$, and let $\chi_\mu$ be the central character corresponding to the Verma module $M(\mu)$.  If $\eta : \n^+ \to \cc$ is such that $M_\eta$ is simple, then $M_\eta$ admits the central character $\chi_\mu$
\end{prop}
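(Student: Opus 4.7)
The plan is to embed $M_\eta$ into the dual Verma module $N(-\mu)^*$ and use simplicity together with the preceding lemma to transfer the central character.

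The first step is to produce a nonzero Whittaker vector of type $\eta$ in $N(-\mu)^*$. By PBW, the vector space structure of $N(-\mu)$ is $U(\n^+) \otimes_\cc \cc_{-\mu}$, with $\n^+$ acting by left multiplication on the first factor. Because $\eta$ is a Lie algebra homomorphism into the abelian Lie algebra $\cc$, so is $-\eta$, and hence $-\eta$ extends uniquely to an algebra homomorphism $\psi: U(\n^+) \to \cc$. Define $f \in N(-\mu)^*$ by $f(u \otimes 1) = \psi(u)$ for $u \in U(\n^+)$. Taking the dual action to be $(x.g)(v) = -g(x.v)$ (the convention consistent with the preceding lemma's assertion that the distinguished vector in $N(-\mu)^*$ has weight $\mu$), for any $x \in \n^+$ and $u \in U(\n^+)$ one computes
$$(x.f)(u \otimes 1) = -f(xu \otimes 1) = -\psi(x)\psi(u) = \eta(x)\psi(u) = \eta(x) f(u \otimes 1),$$
so $x.f = \eta(x) f$. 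Since $f(1 \otimes 1) = \psi(1) = 1$, this $f$ is nonzero.

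The second step is standard. By tensor-Hom adjunction,
$$\Hom_\g(M_\eta, N(-\mu)^*) \cong \Hom_{\n^+}(\cc_\eta, N(-\mu)^*),$$
and the right-hand side is precisely the space of Whittaker vectors of type $\eta$ in $N(-\mu)^*$. The vector $f$ therefore gives rise to a nonzero $\g$-module homomorphism $\phi: M_\eta \to N(-\mu)^*$, which must be injective by simplicity of $M_\eta$. The preceding lemma tells us that $N(-\mu)^*$ admits the central character $\chi_\mu$, and this property passes to the submodule $\phi(M_\eta)$; by the isomorphism $M_\eta \cong \phi(M_\eta)$, we conclude that $M_\eta$ admits the central character $\chi_\mu$.

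The nontrivial content lies in the first step: although $\eta$ itself does not define a Whittaker vector on $N(-\mu)$ in any naive way, the sign introduced by the dual action on $N(-\mu)^*$ is precisely what converts the Whittaker relation $x.f = \eta(x) f$ into the requirement that $\psi$ be multiplicative, which holds by the universal property of $U(\n^+)$ applied to $-\eta$. Simplicity of $M_\eta$ is then used only to upgrade the homomorphism $\phi$ into an embedding.
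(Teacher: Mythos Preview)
Your proof is correct and follows essentially the same strategy as the paper: embed $M_\eta$ into $N(-\mu)^*$ via a Whittaker vector, and use the preceding lemma to transfer the central character $\chi_\mu$.

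The only differences are minor. Where the paper cites Lemma~37 of \cite{BatMaz} to obtain a Whittaker vector of type $\eta$ in $N(-\mu)^*$, you construct one explicitly via the algebra homomorphism $\psi$ extending $-\eta$; this makes your argument more self-contained. Conversely, the paper first invokes a generalized Schur's Lemma to know that $M_\eta$ admits \emph{some} central character $\xi_\eta$ and then compares $\xi_\eta$ with $\chi_\mu$, whereas you skip this step and read off the central character directly from the embedding. Your shortcut is legitimate because the preceding lemma already asserts that $N(-\mu)^*$ (not just some submodule) has central character $\chi_\mu$, so any submodule, in particular $\phi(M_\eta)$, inherits it.
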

\begin{proof}
By a generalization of Schur's Lemma (Cf. \cite[Ex.~2.12.28]{rowen:rt88}), the simple module $M_\eta$ must admit a central character $\xi_\eta : Z(\g) \to \cc$.  For $\mu \in \h^*$, the module $N(-\mu)^*$ has a one-dimensional space of Whittaker vectors of type $\eta$ (see Lemma 37 of \cite{BatMaz}), and thus there is a $\g$-module homomorphism 
$$\varphi : M_\eta \to N(-\mu)^*.$$
As $M_\eta$ is simple, $\varphi$ is necessarily injective.   Now the image of $\varphi$ is a submodule of $N(-\mu)^*$, which admits the central character $\chi_\mu$, so $Z(\g)$ must act by $\chi_\mu$ on the image of $\varphi$.  Then for all $v \in M_\eta$ and $z \in Z(\g)$, we have 
$$\xi_\eta (z) \varphi (v) = \varphi ( \xi_\eta (z) v)) = \varphi ( zv) = z \varphi (v)  = \chi_\mu (z) \varphi (v),$$
and this shows that $\xi_\eta = \chi_\mu$ as long as $M_\eta$ is simple.  
\end{proof}

\begin{cor}
Assume $\g = \n^- \oplus \h \oplus \n^+$ has a  triangular decomposition.  If there exists $\eta : \n^+ \to \cc$ such that the standard Whittaker module $M_\eta$ is simple, then every Verma module admits the same central character.  
\end{cor}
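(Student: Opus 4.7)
The plan is to observe that this corollary follows essentially immediately from Proposition~\ref{prop:cenCharWhiVerm} once one notes that a simple module admits at most one central character. Suppose $\eta : \n^+ \to \cc$ is chosen so that the standard Whittaker module $M_\eta$ is simple. By the generalized Schur lemma already invoked in the proof of Proposition~\ref{prop:cenCharWhiVerm}, the simple module $M_\eta$ admits a central character $\xi_\eta : Z(\g) \to \cc$, and this central character is unique: evaluating $z \in Z(\g)$ on any fixed nonzero vector $v \in M_\eta$ determines $\xi_\eta(z)$, so no two distinct homomorphisms $Z(\g) \to \cc$ can serve as central characters on the same simple module.

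Now fix an arbitrary $\mu \in \h^*$. Proposition~\ref{prop:cenCharWhiVerm} asserts that $M_\eta$ admits the central character $\chi_\mu$ corresponding to the Verma module $M(\mu)$. The uniqueness observation above then forces $\xi_\eta = \chi_\mu$. Since $\mu \in \h^*$ was arbitrary, this identification gives $\chi_\mu = \xi_\eta$ for \emph{every} $\mu \in \h^*$, so the central character $\chi_\mu$ of the Verma module $M(\mu)$ is in fact independent of $\mu$. Thus all Verma modules admit the common central character $\xi_\eta$, as claimed.

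The substantive mathematical work is already done inside Proposition~\ref{prop:cenCharWhiVerm} (where an injection $M_\eta \hookrightarrow N(-\mu)^*$ is used to transfer the central character of a dual lowest-weight Verma module to $M_\eta$). Consequently there is no real obstacle in the corollary itself; the only conceptual point is that a single simple module cannot simultaneously realize two different central characters, and this is what lets us "synchronize" the $\chi_\mu$ over all $\mu \in \h^*$. In the special case of the insertion-elimination algebra $\g$, Theorem~\ref{thm:depthBndedSimp} (or Theorem~\ref{thm:rootBoundedSimp}) supplies concrete $\eta$ for which $M_\eta$ is simple, yielding the promised application stated as Corollary~\ref{cor:cenCharInsElim} in the introduction.
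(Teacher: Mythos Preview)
Your proof is correct and follows exactly the paper's approach: apply Proposition~\ref{prop:cenCharWhiVerm} to conclude $\chi_\mu = \xi_\eta$ for every $\mu \in \h^*$, and then use uniqueness of the central character on the simple module $M_\eta$ to deduce $\chi_{\mu_1} = \xi_\eta = \chi_{\mu_2}$ for all $\mu_1, \mu_2$. The paper states this in one line; your version merely makes explicit the (trivial) uniqueness step.
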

\begin{proof}
If $\xi_\eta$ is the central character admitted by $M_\eta$, then for $\mu_1, \mu_2 \in \h^*$, Proposition \ref{prop:cenCharWhiVerm} implies that $\chi_{\mu_1} = \xi_\eta = \chi_{\mu_2}$, as desired.
\end{proof}

\begin{cor}
Let $\g = \n^- \oplus \h \oplus \n^+$ be a Lie algebra with a triangular decomposition.  Then any  two simple standard Whittaker modules admit the same central character.
\end{cor}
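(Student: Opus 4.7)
The plan is to observe that this corollary is an immediate consequence of Proposition~\ref{prop:cenCharWhiVerm}. Given two simple standard Whittaker modules $M_{\eta_1}$ and $M_{\eta_2}$, I would first invoke the generalized Schur's Lemma (as cited in the proof of Proposition~\ref{prop:cenCharWhiVerm}) to conclude that each $M_{\eta_i}$ admits a central character, say $\xi_{\eta_i} : Z(\g) \to \cc$.

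Next, I would fix any single $\mu \in \h^*$ and consider the Verma module $M(\mu)$ with its central character $\chi_\mu$. Since $M_{\eta_1}$ is simple, Proposition~\ref{prop:cenCharWhiVerm} applies to give $\xi_{\eta_1} = \chi_\mu$. Applying the same proposition to $M_{\eta_2}$ (with the same $\mu$) yields $\xi_{\eta_2} = \chi_\mu$. Combining these two equalities, $\xi_{\eta_1} = \chi_\mu = \xi_{\eta_2}$, which is the desired conclusion.

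There is no real obstacle here: the work has already been done in Proposition~\ref{prop:cenCharWhiVerm}, which gave the surprisingly strong statement that a single simple $M_\eta$ must admit every $\chi_\mu$ as its central character. Once one has that proposition, the corollary is essentially a one-line consequence, running parallel to the preceding corollary (where one used two different $\mu$'s and one $\eta$; here one uses two different $\eta$'s and one $\mu$). The only subtlety worth mentioning is the appeal to the generalized Schur's Lemma to guarantee the existence of $\xi_{\eta_i}$ in the first place, which is needed because $U(\g)$ need not have countable dimension over $\cc$ in the stated generality.
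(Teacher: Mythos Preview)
Your proof is correct and follows essentially the same route as the paper: fix a single $\mu \in \h^*$ and apply Proposition~\ref{prop:cenCharWhiVerm} to each of $M_{\eta_1}$ and $M_{\eta_2}$ to conclude both have central character $\chi_\mu$. The separate appeal to Schur's Lemma is harmless but redundant, since Proposition~\ref{prop:cenCharWhiVerm} already asserts that $M_\eta$ admits $\chi_\mu$ (existence included).
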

\begin{proof}
If $M_{\eta_1}$ and $M_{\eta_2}$ are simple, for $\eta_1 : \n^+ \to \cc$ and $\eta_2 : \n^+ \to \cc$, then for $\mu \in \h^*$, Proposition \ref{prop:cenCharWhiVerm} implies both $M_{\eta_1}$ and $M_{\eta_2}$ have the central character $\chi_\mu$.  
\end{proof}

Since the standard Whittaker module $M_\eta$ for the insertion-elimination Lie algebra is simple whenever $\eta$ has finite support or is root bounded, we conclude the following, which suggests that $Z(\g) = \cc 1$ for the insertion-elimination Lie algebra.

\begin{cor}\label{cor:cenCharInsElim}
Every Verma module for the insertion-elimination Lie algebra admits the same central character.  
\end{cor}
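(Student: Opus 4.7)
The plan is to combine the simplicity results from Section \ref{sec:simplicityofM} with the general fact (the immediately preceding corollary) that, for any Lie algebra with a triangular decomposition, the existence of a single simple standard Whittaker module forces all Verma modules to share a central character. So the proof reduces to exhibiting just one homomorphism $\eta : \n^+ \to \cc$ for which $M_\eta$ is simple.

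First I would invoke Theorem \ref{thm:depthBndedSimp} (or equivalently Theorem \ref{thm:rootBoundedSimp}, or even the easier Corollary \ref{cor:finitesupport}): choose any nonzero $\eta : \n^+ \to \cc$ with finite support, for instance the homomorphism defined by $\eta(D_{\bullet}^+) = 1$ and $\eta(D_t^+) = 0$ for all $t \in \T$ with $|t| \ge 2$. One should briefly verify that this is a well-defined Lie algebra homomorphism on $\n^+$; this amounts to checking that $\eta$ vanishes on $[\n^+, \n^+]$, which is immediate from the bracket formula $[D_s^+, D_t^+] = \sum_{v \in V(t)} D_{t \cup_v s}^+ - \sum_{v \in V(s)} D_{s \cup_v t}^+$, since every tree appearing on the right has strictly more than one vertex. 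Being of finite support, $\eta$ is in particular depth-bounded (and root-bounded), so Theorem \ref{thm:depthBndedSimp} guarantees that $M_\eta$ is simple.

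Next I would apply the preceding corollary to this choice of $\eta$. Since $\g$ has a triangular decomposition in the sense of \cite{MP95} (as noted in Subsection \ref{subsec:defsNotation}), the hypotheses of that corollary are satisfied. Hence for any $\mu_1, \mu_2 \in \h^*$, we have $\chi_{\mu_1} = \xi_\eta = \chi_{\mu_2}$, which is exactly the statement of Corollary \ref{cor:cenCharInsElim}.

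There is no real obstacle here; the whole content of the corollary has been pushed into the simplicity theorems of Section \ref{sec:simplicityofM} and the general Proposition \ref{prop:cenCharWhiVerm}. The only minor point of care is the verification that a nonzero $\eta$ with the required support property actually exists as a Lie algebra homomorphism, but this is immediate from the observation that $\h^* \subseteq \n^+$ is abelian on the linear span of $D_{\bullet}^+$ and that brackets of elements in $\n^+$ always produce trees of strictly larger size than at least one of the inputs.
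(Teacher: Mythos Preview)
Your proposal is correct and matches the paper's approach: the paper simply notes, just before the corollary, that $M_\eta$ is simple for $\eta$ of finite support (or root-bounded) and then invokes the preceding corollary, exactly as you do. Your explicit construction of a nonzero finite-support $\eta$ and the verification that it vanishes on $[\n^+,\n^+]$ is a welcome bit of extra care, but the argument is otherwise identical.
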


\end{document}